\documentclass[11pt]{article} 

\usepackage{amsfonts}
\usepackage{amsmath}
\usepackage{amssymb}
\usepackage{color}
\usepackage[dvipsnames]{xcolor}
\usepackage{graphicx}
\usepackage{latexsym}
\usepackage{mathrsfs} 
\usepackage{multirow}
\usepackage[round]{natbib}
\usepackage{amsthm}

\sloppy \hyphenpenalty=10000
\hfuzz=20truept

\newcommand{\bel}{\begin{eqnarray}\label}
\newcommand{\eel}{\end{eqnarray}}
\newcommand{\bes}{\begin{eqnarray*}}
\newcommand{\ees}{\end{eqnarray*}}
\newcommand{\bei}{\begin{itemize}}
\newcommand{\eei}{\end{itemize}}
\newcommand{\beiftnt}{\begin{itemize}\footnotesize}

\def\benu{\begin{enumerate}}
\def\eenu{\end{enumerate}}

\def\real{{\mathbb{R}}}
\def\R{{\real}}

\def\E{{\mathbb{E}}}

\def\P{{\mathbb{P}}}

\def\complex{\mathop{{\rm I}\kern-.58em\hbox{\rm C}}\nolimits}

\def\Cov{\hbox{\rm Cov}}
\def\Corr{\hbox{\rm Corr}}
\def\Var{\hbox{\rm Var}}

\def\mathbold{\boldsymbol} 


\def\fhat{\widehat{f}}

\def\calF{{\cal F}}\def\scrF{{\mathscr F}}

\def\calI{{\cal I}}

\def\calJ{{\cal J}}

\def\calT{{\cal T}}

\def\bu{\mathbold{u}}

\def\bX{\mathbold{X}}

\def\bY{\mathbold{Y}}


\def\lam{\lambda}

\topmargin 0in
\oddsidemargin .3in
\evensidemargin 0.3in
\textwidth 6in
\textheight 8.5in

\newtheorem{theorem}{Theorem}

\newtheorem{lemma}{Lemma}

\newtheorem{corollary}{Corollary}

\def\Corr{\hbox{\rm Corr}}
\def\pen{\hbox{\rm Pen}}

\def\rholam{\rho}

\begin{document}
\title{
Extreme Nonlinear Correlation for Multiple Random Variables and Stochastic 
Processes with Applications to Additive Models
}
\author{Zijian Guo and Cun-Hui Zhang}

%
%
%
%
%
%
%


\centerline{\large \bf 
Extreme Eigenvalues of Nonlinear Correlation {Matrices}} 
\centerline{\large \bf
 with Applications to Additive Models}

\medskip
\begin{center}
{\sc Zijian Guo\footnote{Research partially supported by the NSF DMS-1811857, DMS-2015373.} 
and 
Cun-Hui Zhang\footnote{Research partially supported by the NSF Grants DMS-1513378,
DMS-1721495, IIS-1741390 and CCF-1934924.}
}\\ 
Rutgers University
\end{center}

\begin{center}
\em 
Dedicated to the memory of Larry Shepp
\end{center}

%

\begin{abstract}

The maximum correlation of functions of a pair of random variables is an important measure of stochastic dependence. It is known that this maximum nonlinear correlation is identical to the absolute value of the Pearson correlation for a pair of Gaussian random variables or a pair of finite sums of iid random variables. This paper extends these results to pairwise Gaussian vectors and processes, nested sums of iid random variables, and permutation symmetric functions of sub-groups of iid random variables. It also discusses applications to additive regression models.

\end{abstract}

\noindent {\bf Keywords:} Nonlinear Correlation;  {Extreme Eigenvalue}; Gaussian Copula; 
{Restricted Eigenvalue;} 
Compatibility Condition; Additive Model; Symmetric Functions. 

\section{Introduction} 
The maximum correlation of functions of a pair of random variables 
is an important measure of {stochastic} dependence. Formally, given 
random variables $X_1$ and $X_2$, 
the maximum correlation is defined as 
\bel{NL-corr-pair}
R(X_1,X_2)=\sup\Big\{\Cov\Big( f_1(X_1), f_2(X_2)\Big): 
\Var\big(f_1(X_1)\big)=\Var\big(f_2(X_2)\big) = 1\Big\},
\eel
where $f_1$ and $f_2$ are real functions. 
If $X_1$ and $X_2$ are bivariate normal, it was established in 
{\cite{lancaster1957some}} 
that 
\begin{equation}
R(X_1,X_2)=|\rho(X_1,X_2)|
\label{eq: known}
\end{equation}
where $\rho(X_1,X_2)$ denotes the Pearson correlation between $X_1$ and $X_2$. 
Dembo, Kagan and Shepp (2001) showed that the equality \eqref{eq: known} holds 
{with $R(X_1,X_2) = \sqrt{m/n}$, $1\le m\le n$,} if $X_1$ and $X_2$ are 
respectively nested sums of $m$ and $n$ 
independent and identically distributed (iid) random variables with finite second moment. 
{Following their work, \cite{bryc2005maximum} 
removed the second moment condition for the nested sums, and 
\cite{yu2008maximal} extended the result to two sums of arbitrary finite subsets of iid random variables.}

The current paper extends the above results to more than two random variables and Gaussian processes. 
Let $\lam_{\min}$ and $\lam_{\max}$ denote the smallest and largest  eigenvalues of matrices or  
linear operators, and $\Corr_{\neq}(X_1,\ldots,X_p)$ the $p\times p$ off-diagonal correlation matrix of $p$ random variables with elements $\rho(X_j,X_k)I_{\{j\neq k\}}$. 
Since the maximum correlation of a pair of random variables can be expressed as
$R(X_1,X_2)=\sup_{f_1,f_2}\lambda_{\max}\left(\Corr_{\neq}(f_1(X_1), f_2(X_2)\right),$ 
a natural extension of the maximum nonlinear correlation to the multivariate setting is the 
extreme eigenvalue of the off-diagonal correlation matrix of marginal function transformations of $X_1,\ldots,X_p$, 
\bel{NL-corr-max}
{\rho^{NL}_{\max}}(X_1,\ldots,X_p)  
=\sup_{f_1,\ldots,f_p} \lambda_{\max}\left(\Corr_{\neq}\left(f_1(X_1),\dots,f_p(X_p) \right)\right), 
\eel
where the supreme is taken over all deterministic $f_j$ with $0<\Var\big(f_j^2(X_j)\big)<\infty$, and 
similarly 
\bel{NL-corr-min}
{\rho^{NL}_{\min}}(X_1,\ldots,X_p)  
=\inf_{f_1,\ldots,f_p} \lambda_{\min}\left(\Corr_{\neq}\left(f_1(X_1),\dots,f_p(X_p) \right)\right). 
\eel
For {$p=2$, $\rho^{NL}_{\max} = -\rho^{NL}_{\min}\in [0,1]$. 
However, 
for $p\ge 3$,  
$\rho^{NL}_{\min}\in [-1,0]$ is no longer determined by $\rho^{NL}_{\max}\in [0,p-1]$,} 
so that both quantities are needed to capture the extreme eigenvalues of 
{the off-diagonal nonlinear  correlation matrix}. 
Moreover, {\eqref{NL-corr-max} and \eqref{NL-corr-min} lead 
to the following further extension} 
to stochastic processes: For {any process} $X_{\calT} = \{X_t, t\in \calT\}$ 
on an index set $\calT$ equipped with a measure $\nu$ 
{and $W_{s,t}\ge 0$ as a weight function on $\calT\times\calT$,}  
\bel{NL-corr-max-stoch}
\rholam^{NL}_{\max} 
&=& \rholam^{NL}_{\max}(X_{\calT},{\nu,W})  
\\ \nonumber 
&=& \sup_{f_{\calT}\in\calF_{\calT}}\sup_{\|h\|_{L_2(\nu)}=1} 
\int_{t\in \calT}\int_{s\in \calT }  \rho\left(f_s(X_s),f_t(X_t)\right)
{W_{s,t}}h(s)h(t)\nu(ds)\nu(dt), 
\eel
where $\|h\|_{L_2(\nu)}=\big\{\int_{\calT} h^2(t)\nu(dt)\big\}^{1/2}$ 
{and} $\calF_{\calT}$ is the class of all deterministic 
$f_{\calT}=\{f_t,t\in \calT\}$ 
satisfying proper measurability and integrability conditions. Correspondingly, 
\bel{NL-corr-min-stoch}
\rholam^{NL}_{\min} 
&=& \rholam^{NL}_{\min}(X_{\calT},{\nu,W})  
\\ \nonumber 
&=& \inf_{f_{\calT}\in\calF_{\calT}}\inf_{\|h\|_{L_2(\nu)}=1} 
\int_{t\in \calT}\int_{s\in \calT } \rho\left(f_s(X_s),f_t(X_t)\right){W_{s,t}}h(s)h(t)\nu(ds)\nu(dt). 
\eel
Clearly, \eqref{NL-corr-max} and \eqref{NL-corr-min} are respectively special cases of  
\eqref{NL-corr-max-stoch} and \eqref{NL-corr-min-stoch} with $\calT=\{1,\ldots,p\}$, 
{$W_{s,t}=I_{\{s\neq t\}}$ 
and the counting measure $\nu(A)=|A|$. 
{We refer to 
\eqref{NL-corr-max-stoch} and \eqref{NL-corr-min-stoch} 
as the maximum, minimum or extreme nonlinear correlations of the process $X_{\calT}$. }
Let $K_{W,f_{\calT}}(s,t) = \rho\big(f_s(X_s),f_t(X_t)\big)W_{s,t}$ as a kernel and 
$K_{W,f_{\calT}}: h\to \int K_{W,f_{\calT}}(\cdot,s) h(s)\nu(ds)$ as a linear operator in $L_2(\nu)$. 
The extreme nonlinear correlations in \eqref{NL-corr-max-stoch} and \eqref{NL-corr-min-stoch} 
are expressed as the extreme eigenvalues of the operator $K_{W,f_{\calT}}:L_2(\nu)\to L_2(\nu)$ via  
\bel{extreme-NL-kernel}
\rholam^{NL}_{\max}
= \sup_{f_{\calT}\in\calF_{\calT}} \lambda_{\max}(K_{W,f_{\calT}}),\quad 
\rholam^{NL}_{\min}
= \inf_{f_{\calT}\in\calF_{\calT}} \lambda_{\min}(K_{W,f_{\calT}}). 
\eel
Because the weight function $W$ is almost completely general, it can be used to absorb 
{the Radon-Nikodym derivative between two choices of the measure $\nu$ as follows.} 
The pair $\{\nu',W')$ would yield the same extreme nonlinear correlations as $\{\nu,W)$ when 
the measures $\nu$ and $\nu'$ are absolutely continuous with respect to each other and 
$W'_{s,t} = W_{s,t}\sqrt{\nu(ds)/\nu'(ds)}\sqrt{\nu(dt)/\nu'(dt)}$. 
{For $\calT=\{1,\ldots,p\}$, we may take $\nu$ as the counting measure 
without loss of generality, so that the quantities in \eqref{extreme-NL-kernel} are given by the 
extreme eigenvalues of the matrix $(K_{W,f_{\calT}}(j,k))_{p\times p}$.}
}

The main assertion of this paper is that in a number of settings, 
{the above weighted extreme nonlinear correlations are identical to their} linear counterpart: 
\bel{theme}
\rholam^{NL}_{\max} = \rholam^{L}_{\max}\ \hbox{ and }\ \rholam^{NL}_{\min} = \rholam^{L}_{\min}, 
\eel
where $\rholam^{L}_{\max}$ and $\rholam^{L}_{\min}$ are defined by restricting 
the {functions} 
$f_t$ in \eqref{NL-corr-max-stoch} and \eqref{NL-corr-min-stoch} to be the identity $f(x)=x$; 
e.g. in the more general stochastic process setting, 
\bel{L-corr-max-stoch}
\rholam^{L}_{\max} = \rholam^{L}_{\max}(X_{\calT},{\nu,W})  
=\sup_{\|h\|_{L_2(\nu)}=1} \int_{t\in \calT}\int_{s\in \calT}  \rho\left(X_s,X_t\right){W_{s,t}}h(s)h(t)\nu(ds)\nu(dt), 
\eel
and 
\bel{L-corr-min-stoch}
\rholam^{L}_{\min} = \rholam^{L}_{\min}(X_{\calT},{\nu,W})  
=\inf_{\|h\|_{L_2(\nu)}=1} \int_{t\in \calT}\int_{s\in \calT } 
\rho\left(X_s,X_t\right){W_{s,t}}h(s)h(t)\nu(ds)\nu(dt). 
\eel
{We note that for $W_{s,t}=1$, $\rholam^{NL}_{\max}\le \nu(\calT)$ 
and {$\rholam^{NL}_{\min}\ge 0$}, so that \eqref{theme} is trivial when $\rholam^{L}_{\max} =\nu(\calT)$ 
and $\rholam^{L}_{\min}=0$. In fact, the first identify of \eqref{theme} is nontrivial when $\rholam^{L}_{\max}<\nu(\calT)$  and the second identify of \eqref{theme} is nontrivial when $\rholam^{L}_{\min}>0.$   {However, for general $W_{s,t}$, 
there is no explicit formula for such attainable extreme solutions 
when the maximum and minimum are also taken over all correlation 
operators $\rho(X_s,X_t)$.} Similar to \eqref{extreme-NL-kernel}, we define 
\bel{extreme-L-kernel}
\rholam^{L}_{\max}
= \lambda_{\max}(K_{W}),\quad 
\rholam^{L}_{\min} = \lambda_{\min}(K_{W}). 
\eel
where $K_{W}: h\to \int K_{W}(\cdot,s) h(s)\nu(ds)$ is the linear operator in $L_2(\nu)$ with 
the kernel $K_{W}(s,t) = \E\big[X_sX_t\big]W_{s,t}$. 
As discussed below \eqref{extreme-NL-kernel}, for $\calT=\{1,\ldots,p\}$ we may take 
$\nu$ as the counting measure without loss of generality, so that \eqref{extreme-L-kernel} 
is given by the extreme eigenvalues of the matrix $(K_{W}(j,k))_{p\times p}$. 
}

We will begin by proving \eqref{theme} for Gaussian processes $X_{\calT}$ on an arbitrary 
index set {$\calT$.} 
Our analysis bears some resemblance to that of \cite{lancaster1957some} through the use of 
the Hermite polynomial expansion, but the general functional nature of our problem 
requires additional elements involving the spectrum boundary of the Schur product of linear operators. 
In fact, we prove that only a pairwise bivariate Gaussian condition is required for 
\eqref{theme} under proper measurability and integrability conditions.    


\subsection{Hidden pairwise Gaussian and additive models}
{We generalize the results in \eqref{theme} from pairwise Gaussian vectors to more general random vectors and then present two implications to the analysis of additive models.}
We shall say that a random {vector $X_{1:p}=(X_1,\ldots,X_p)$ is} {\it hidden Gaussian} if 
$X_j = T_j(Z_j)$ for a Gaussian vector ${Z_{1:p}}=(Z_1,\ldots,Z_p)$ and 
some deterministic transformations $T_j, 1\le j\le p$; {$X_{1:p}$ is} {\it hidden pairwise Gaussian} 
if the Gaussian requirement on {$Z_{1:p}$} is reduced to pairwise Gaussian. 
The {identities} in \eqref{theme} for the pairwise 
Gaussian process 
{is equivalent to 
\bes
\rholam^L_{\min}(Z_{1:p},\nu,W) \le \rholam^{NL}_{\min}(X_{1:p},\nu,W),\quad 
\rholam^{NL}_{\max}(X_{1:p},\nu,W) \le\rholam^L_{\max}(Z_{1:p},\nu,W),
\ees
for all measures $\nu$ and weights $W_{s,t}$.} 
That is to say, if the correlation structure of {$X_{1:p}$} is generated from a pairwise Gaussian distribution through marginal transformations, 
{then their extreme nonlinear correlations are controlled within 
the extreme linear correlations} of the underlying {pairwise} Gaussian distribution. 
When {$Z_{1:p}$ is} jointly Gaussian and the transformations $T_j$ are monotone, 
this is the Gaussian copula model widely used in financial risk assessment and other areas of applications. 

Our interest in the extreme nonlinear correlations arises from our study of 
the additive regression model where the response variable {$Y$} can be written as 
$${Y}=\sum_{j=1}^{p} f_{j}(X_j)+\epsilon.$$  
As an important nonlinear relaxation of the linear regression, this model 
effectively mitigates the curse of dimensionality in the more complex 
multiple nonparametric regression 
\citep{buja1989linear,wood2017generalized,hastie1986generalized}. 
Let $\|f\|_{L_2^{(0)}(\P)}$ denote the semi-norm given 
by $\|f\|_{L_2^{(0)}(\P)}^2=\Var(f({X_{1:p}}))$. 
Our result on the minimum eigenvalue of {the} nonlinear correlation matrix 
has two interesting implications in the analysis of 
additive models as follows. 
Firstly, the characterization of $\rho^{NL}_{\min}$ in the current paper 
{can} be used to verify the 
theoretical restricted eigenvalue and compatibility conditions 
required for the analysis of additive models. 
In particular, the {theoretical restricted eigenvalue and compatibility conditions} on the design are critical for establishing {upper} bounds on the prediction error 
{$\| \sum_{j=1}^{p} \fhat_j - \sum_{j=1}^{p} f_j \|_{L_2^{(0)}(\P)}^2$} of regularized estimators $\widehat{f}$ in the additive model   \citep{meier2009high,koltchinskii2010sparsity,raskutti2012minimax, suzuki2013fast,tan2017penalized}. 
Secondly, when the minimum nonlinear correlation of 
{$X_{1:p}$} is bounded away from zero, the squared loss for the estimation of 
individual {$f_j$} can be derived from the prediction error bound via 
\begin{equation*}
{\sum_{j=1}^{p} \|\fhat_j-f_j\|_{L_2^{(0)}(\P)}^2 
\leq \frac{1}{\rholam^{NL}_{\min}}\bigg\| \sum_{j=1}^{p} \fhat_j - \sum_{j=1}^{p} f_j \bigg\|_{L_2^{(0)}(\P)}^2} 
\end{equation*}
{where $\rholam^{NL}_{\min}$ is defined in \eqref{NL-corr-min-stoch} with the counting measure $\nu(A)=|A|$ and uniform weight $W_{s,t}=1$.} 
See Section \ref{sec: comp} for more detailed discussions.

\subsection{Symmetric functions}

In addition to the extension of \cite{lancaster1957some} to pairwise Gaussian processes and vectors, 
the current paper directly extends 
{the results of Dembo, Kagan and Shepp (2001), \cite{bryc2005maximum}} 
{and \cite{yu2008maximal}} 
by establishing \eqref{theme} for 
{nested} sums $\left(X_1,X_2,\cdots, X_{p}\right)$ of iid random variables $Y_i$, 
{with} $X_j=\sum_{i=1}^{m_j} Y_i$ 
for some positive integers {$m_1<\cdots<m_p$.} 
Moreover, as a natural generalization of the {nested} sums, 
we consider groups of the iid variables as random vectors $\bX_j=(Y_{i}, i\in G_j)$ 
where $G_j$ are sets of positive integers. We extend the first part of \eqref{theme} by proving that 
{for the counting measure $\nu$ and any weights $W_{j,k}\ge 0$}
\bel{new-max}
\max_{{\rm symmetric} \; f_1,\ldots,f_p} \rholam_{\max}^{L}\big({\big(f_1(\bX_1),\ldots,f_p(\bX_p)\big),\nu,W}\big) 
= \rholam_{\max}^{L}\big({\big(S_{G_1},\ldots,S_{G_p}\big),\nu,W}\big)  
\eel
{where $S_{G_j} = \sum_{i\in G_j} h_0(Y_i)$ for any deterministic function $h_0$ satisfying 
$0<\Var(h_0(Y_i))<\infty$} and the maximum is taken 
{over} all deterministic functions $f_i$ symmetric in the permutation of its arguments. 
{In the sequel, such $f_i$ are simply called symmetric functions.}  
We also establish the corresponding {identity for the minimum correlation,} 
\bel{new-min}
\min_{{\rm symmetric} \; f_1,\ldots,f_p} \rholam_{\min}^{L}
\big({\big(f_1(\bX_1),\ldots,f_p(\bX_p)\big),\nu,W}\big) 
= \rholam_{\min}^{L}\big(\big({S_{G_1},\ldots,S_{G_p}\big),\nu,W}\big),  
\eel
under a mild condition {which holds when} 
$\cap_{j=1}^p G_j\neq \emptyset$. 


\subsection{Paper organization}

The rest of the paper is organized as follows. In Section \ref{sec: Gaussian}, we study the extreme eigenvalues of nonlinear correlation matrix for pairwise Gaussian random {vectors and processes;} 
In Section \ref{sec: comp}, we discuss the implications {of our results in Section \ref{sec: Gaussian} on} 
additive models; In Section \ref{sec: symiid}, we study the extreme eigenvalues of nonlinear correlation matrix of {nested} sums and also the more general symmetric functions {of iid random variables.}  

\section{
Pairwise Gaussian Processes}
\label{sec: Gaussian}
%
%
To start with, we shall explicitly specify the measurability and integrability conditions 
for the definition of the extreme linear and nonlinear correlations in 
\eqref{L-corr-max-stoch}, \eqref{L-corr-min-stoch},
\eqref{NL-corr-max-stoch} and \eqref{NL-corr-min-stoch}. 

\noindent {\bf Assumption A:} (i) {\it 
{There exist $B_n\subset B_{n+1}\subset \calT$ such that $\cup_{n=1}^\infty B_n=\calT$, 
$\nu(B_n)<\infty$ and 
$\int_{B_n}\int_{B_n} W^2_{s,t}\nu(ds)\nu(dt)<\infty$ for any positive integer $n\geq 1$.}\\ 
(ii) {\it The process $X_{\calT}$ is standardized to $\E[X_t]=0$ and $\E[X_t^2]=1$, 
the {correlation operator $\E\big[X_sX_t\big]$} is measurable as a function of 
$(s,t)$ in the product space $\calT\times\calT$, and 
the weight function $W_{s,t}$ is element-wise nonnegative 
{and symmetric, 
$W_{s,t}=W_{t,s}\ge 0$.}}\\ 
(iii) {\it {The operator $K_W$ in \eqref{extreme-L-kernel} is bounded. }}
}

We note that there is no loss of generality to assume that $X_{\calT}$ is standardized 
as \eqref{L-corr-max-stoch} and \eqref{L-corr-min-stoch} involve only the correlation between 
$X_s$ and $X_t$. 
Under Assumption A {(iii),} the operator $K_W$ 
yielding finite extreme linear correlations in 
\eqref{L-corr-max-stoch} and \eqref{L-corr-min-stoch}. 


\noindent {\bf Assumption B:} {\it In \eqref{NL-corr-max-stoch} and \eqref{NL-corr-min-stoch},  
$\calF_{\calT}$ is the class of all function families $f_{\calT}=\{f_t,t\in \calT\}$ 
with $\E[f_t(X_t)]=0$, $\E[f_t^2(X_t)]>0$ and $\int_{\calT}\E[f_t^2(X_t)]\nu(dt)<\infty$ such that 
$\E\big[X_t^m f_t(X_t)\big]$ are measurable functions of $t$ on $\calT$ for all {integers} 
$m\ge 1$, and 
in \eqref{extreme-NL-kernel} the kernel $K_{W,f_{\calT}}(s,t)=\Corr(f_s(X_s),f_t(X_t))W_{s,t}$ 
is a measurable function of $(s,t)$ on {$\calT\times\calT$.}} 
 
{In} the discrete case where $\calT = \{1,\ldots,p\}$, 
Assumption A {always holds} when $\E[X_t]=0$ and $\E[X_t^2]=1$ and 
Assumption B {always holds} when $\calF_{\calT}$ {is the set of} all $f_{\calT} = \{f_1,\ldots,f_p\}$ 
satisfying $\E[f_j(X_j)]=0$ and $0<\E[f_j^2(X_j)]<\infty$, $j=1,\ldots,p$. 

We first establish some equivalent expressions 
to \eqref{NL-corr-max-stoch} and \eqref{NL-corr-min-stoch}
in the following lemma.
\begin{lemma}\label{lm-1}
Let $\rholam^{NL}_{\max}$ and $\rholam^{NL}_{\min}$ be as in 
\eqref{NL-corr-max-stoch} and \eqref{NL-corr-min-stoch} 
with the function class $\calF_{\calT}$ specified in Assumption B. 
Then, 
\begin{equation}
\rholam^{NL}_{\max} =\sup_{f_{\calT}\in\calF_{\calT}} 
\frac{ \int_{t\in \calT}\int_{s\in \calT }  \E\big[f_s(X_s),f_t(X_t)\big] {W_{s,t}} \nu(ds)\nu(dt)}
{\int_{t\in \calT} \E\big[f_t^2(X_t)\big] \nu(dt)},
\label{eq: multi extreme}
\end{equation}
and 
\begin{equation}
\rholam^{NL}_{\min} =\inf_{f_{\calT}\in\calF_{\calT}} 
\frac{ \int_{t\in \calT}\int_{s\in \calT } \E\big[f_s(X_s),f_t(X_t)\big] {W_{s,t}} \nu(ds)\nu(dt)}
{\int_{t\in \calT} \E\big[f_t^2(X_t)\big] \nu(dt)}. 
\label{eq: multi min extreme}
\end{equation}
\label{prop: equi express}
\end{lemma}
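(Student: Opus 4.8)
The plan is to show that the two nested extrema over $f_{\calT}$ and over $h$ in \eqref{NL-corr-max-stoch} collapse into a single extremum, after which the normalization $\|h\|_{L_2(\nu)}=1$ turns into the denominator of a Rayleigh quotient. The device is the substitution
\[
g_t(X_t) = \frac{h(t)}{\sqrt{\E[f_t^2(X_t)]}}\, f_t(X_t),
\]
which is legitimate because $\E[f_t^2(X_t)]>0$ for every $f_{\calT}\in\calF_{\calT}$. Since $\E[f_t(X_t)]=0$ and $X_{\calT}$ is standardized, a direct computation gives $\E[g_t^2(X_t)]=h^2(t)$ and $\E[g_s(X_s)g_t(X_t)]=h(s)h(t)\,\rho(f_s(X_s),f_t(X_t))$, so the integrand in \eqref{NL-corr-max-stoch} equals $\E[g_s(X_s)g_t(X_t)]W_{s,t}$ and the constraint $\|h\|_{L_2(\nu)}^2=\int_{\calT}h^2(t)\nu(dt)=1$ becomes $\int_{\calT}\E[g_t^2(X_t)]\nu(dt)=1$.

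For the bound $\rholam^{NL}_{\max}\le R$, where $R$ denotes the right-hand side of \eqref{eq: multi extreme}, I would note that each admissible pair $(f_{\calT},h)$ produces a $g_{\calT}\in\calF_{\calT}$ with $\int_{\calT}\E[g_t^2(X_t)]\nu(dt)=1$, so the value of the bilinear form in \eqref{NL-corr-max-stoch} equals the ratio in \eqref{eq: multi extreme} evaluated at $g_{\calT}$ and is therefore at most $R$. For the reverse inequality, given any $g_{\calT}\in\calF_{\calT}$ I would invoke the scale invariance of the ratio to normalize $\int_{\calT}\E[g_t^2(X_t)]\nu(dt)=1$, then set $h(t)=\sqrt{\E[g_t^2(X_t)]}$ (so that $\|h\|_{L_2(\nu)}=1$) and $f_t=g_t/h(t)$ on $\{t:h(t)>0\}$, taking $f_t$ to be the standardized identity on the null set $\{t:h(t)=0\}$, where $g_t\equiv 0$. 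This recovers $h(t)f_t=g_t$ and hence the same value for the bilinear form, giving $\rholam^{NL}_{\max}\ge R$. The two bounds yield \eqref{eq: multi extreme}, and \eqref{eq: multi min extreme} follows verbatim with $\sup$ replaced by $\inf$, since the substitution is a value-preserving correspondence between admissible pairs $(f_{\calT},h)$ and normalized families $g_{\calT}$ and so treats both extremes symmetrically.

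The algebra above is not the difficulty; the care lies in the measure-theoretic bookkeeping. I would first confirm that the reparametrized family stays in $\calF_{\calT}$: measurability of $t\mapsto\E[g_t^2(X_t)]$ (needed both for $h$ to be measurable and for $\int_{\calT}\E[g_t^2]\nu(dt)$ to be defined at all) and of $t\mapsto\E[X_t^m g_t(X_t)]$ follow from the corresponding conditions on $f_{\calT}$ in Assumption B together with the measurability of $h$. I would dispatch the degenerate set $\{t:\E[g_t^2(X_t)]=0\}$ by noting that $g_t$ vanishes there, so any standardized choice of $f_t$ leaves every integrand unchanged. Finally I would check that the double integral $\int_{\calT}\int_{\calT}\E[g_s g_t]W_{s,t}\nu(ds)\nu(dt)$ is well-defined and that the manipulation commutes with $\sup$/$\inf$: Cauchy--Schwarz gives $|\E[g_s g_t]|\le\sqrt{\E[g_s^2]}\sqrt{\E[g_t^2]}$, and Assumption A(i) (square-integrability of $W$ on the exhausting sets $B_n$) together with the boundedness in Assumption A(iii) controls the integral and legitimizes Fubini through a limiting argument along $B_n\uparrow\calT$. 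Guaranteeing finiteness and measurability globally, rather than only formally on each $B_n$, is the main obstacle I anticipate.
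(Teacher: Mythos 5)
Your substitution $g_t = h(t)f_t/\sqrt{\E[f_t^2(X_t)]}$ and the resulting forward bound $\rholam^{NL}_{\max}\le R$ are exactly the paper's argument (the paper glosses, as you do, over the possibility that $h$ vanishes at some $t$, which would make $\E[g_t^2(X_t)]=0$ there; this is a shared, harmless quibble). The genuine defect is in your reverse direction. Setting $f_t = g_t/h(t)$ with $h(t)=\sqrt{\E[g_t^2(X_t)]}$ forces $\E[f_t^2(X_t)]=1$ for every $t$, hence $\int_{\calT}\E[f_t^2(X_t)]\nu(dt)=\nu(\calT)$. Assumption B requires this integral to be finite, so your reconstructed family exits $\calF_{\calT}$ whenever $\nu(\calT)=\infty$ --- precisely the regime of the paper's stationary-process applications on the integers and the real line, where the lemma must hold. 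The pair $(f_{\calT},h)$ is then inadmissible in \eqref{NL-corr-max-stoch}, and $\rholam^{NL}_{\max}\ge R$ does not follow from your construction.

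The repair is immediate and is what the paper does: do not rescale $g_{\calT}$ into a new family at all. Since the integrand of \eqref{NL-corr-max-stoch} involves only the correlation $\rho\big(f_s(X_s),f_t(X_t)\big)$, which is scale-invariant in each coordinate, take $f_{\calT}=g_{\calT}$ itself (already in $\calF_{\calT}$ by hypothesis) together with $h(t)=\big\{\E[g_t^2(X_t)]\big/\int_{\calT}\E[g_t^2(X_t)]\nu(dt)\big\}^{1/2}$, which satisfies $\|h\|_{L_2(\nu)}=1$; then pointwise $\rho\big(g_s(X_s),g_t(X_t)\big)W_{s,t}\,h(s)h(t)=\E\big[g_s(X_s)g_t(X_t)\big]W_{s,t}\big/\int_{\calT}\E[g_t^2(X_t)]\nu(dt)$, so the bilinear form equals the Rayleigh ratio exactly and no new membership in $\calF_{\calT}$ needs to be verified. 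Two side remarks: your degenerate-set maneuver on $\{t:h(t)=0\}$ is vacuous in the reverse direction, since Assumption B already demands $\E[g_t^2(X_t)]>0$ for all $t$; and your Fubini/exhaustion concerns via Assumption A(i) and A(iii) are not needed for this lemma --- the paper's proof is a pointwise identity of integrands combined with the definitions, with no interchange of integrals (that machinery is needed later, in the proof of Lemma \ref{lm-2}, not here).
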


A proof of {Lemma \ref{lm-1}} can be found in the Appendix.
{The more explicit expressions established in the lemma} 
{will} facilitate the Hermite {polynomial} expansion of the covariance {in our analysis.} 
Another ingredient of our analysis, stated in the following lemma, 
concerns the extreme eigenvalues of the Schur {product.} 


\begin{lemma}\label{lm-2}
Let $\rholam^{L}_{\max}$ and $\rholam^{L}_{\min}$ be as in 
\eqref{L-corr-max-stoch} and \eqref{L-corr-min-stoch} respectively 
and {$K_W(s,t)=\E[X_sX_t]W_{s,t}$.} Under Assumption A, 
\bel{prop: power eigen} 
\rholam^{L}_{\min} \le \int_{t\in \calT}\int_{s \in \calT} 
{\big(\E[X_sX_t]\big)^{m-1}K_W(s,t)} h(s)h(t)\nu(ds)\nu(dt) \le \rholam^{L}_{\max}. 
\eel
for any integer {$m\ge 1$} and function $h(t)$ with $\int h^2(t)\nu(dt)=1$. 
\end{lemma}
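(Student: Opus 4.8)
The plan is to recognize the middle expression as a quadratic form of a Schur (entrywise) product, and then to show that multiplying $K_W$ entrywise by a positive semidefinite kernel with unit diagonal cannot enlarge its numerical range. Writing $C(s,t)=\E[X_sX_t]$ for the correlation kernel, the integrand in \eqref{prop: power eigen} has kernel $(\E[X_sX_t])^{m-1}K_W(s,t)=C^{m-1}(s,t)\,K_W(s,t)$, the entrywise product of the $(m-1)$-st Schur power of $C$ with $K_W$. For $m=1$ the factor is identically $1$, and the claim reduces to the Rayleigh-quotient bound $\rholam^{L}_{\min}\le\langle h,K_Wh\rangle_{L_2(\nu)}\le\rholam^{L}_{\max}$ for the bounded self-adjoint operator $K_W$ (self-adjoint because $\E[X_sX_t]$ and $W_{s,t}$ are symmetric), which holds by \eqref{extreme-L-kernel} since $\|h\|_{L_2(\nu)}=1$. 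So the content lies in $m\ge 2$.

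The key device is a probabilistic realization of the Schur power $C^{m-1}$ as a genuine second-moment kernel. I would take $m-1$ independent copies $X^{(1)}_{\calT},\dots,X^{(m-1)}_{\calT}$ of the process and set $V_t=\prod_{\ell=1}^{m-1}X^{(\ell)}_t$. By independence across copies and the standardization $\E[X_t^2]=1$ from Assumption A(ii),
\begin{equation*}
\E[V_sV_t]=\prod_{\ell=1}^{m-1}\E\big[X^{(\ell)}_sX^{(\ell)}_t\big]=\big(\E[X_sX_t]\big)^{m-1},\qquad \E[V_t^2]=1 .
\end{equation*}
Thus $(\E[X_sX_t])^{m-1}=\E[V_sV_t]$ with unit diagonal, exactly the feature representation needed, the copies playing the role of the Gram/tensor-power feature map that the Schur product theorem would supply in the matrix case.

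With this in hand I would substitute $\E[V_sV_t]$ for $(\E[X_sX_t])^{m-1}$ and interchange the expectation with the double $\nu$-integral (justified below) to write the middle quantity as $\E\big[\langle G,K_WG\rangle_{L_2(\nu)}\big]$, where $G$ is the random element of $L_2(\nu)$ given by $G(t)=V_t\,h(t)$. Applying the self-adjoint Rayleigh bound pathwise gives $\rholam^{L}_{\min}\|G\|_{L_2(\nu)}^2\le\langle G,K_WG\rangle\le\rholam^{L}_{\max}\|G\|_{L_2(\nu)}^2$, and taking expectations yields the assertion once one notes
\begin{equation*}
\E\|G\|_{L_2(\nu)}^2=\int_{\calT}\E[V_t^2]\,h^2(t)\,\nu(dt)=\int_{\calT}h^2(t)\,\nu(dt)=1 .
\end{equation*}
In particular $\E\|G\|_{L_2(\nu)}^2=1<\infty$ forces $G\in L_2(\nu)$ almost surely, so the pathwise Rayleigh bound is legitimate.

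The main obstacle is purely the measure-theoretic bookkeeping in the general process setting: justifying the Fubini interchange and the finiteness of the quadratic form when $W$ need not be globally square-integrable. Here I would lean on Assumption A: on each $B_n$ one has $\int_{B_n}\int_{B_n}W_{s,t}^2\,\nu(ds)\nu(dt)<\infty$ and $\nu(B_n)<\infty$, while $|\E[X_sX_t]|\le 1$ and $\E|V_sV_t|\le(\E V_s^2\,\E V_t^2)^{1/2}=1$ dominate the relevant integrand by $W_{s,t}|h(s)h(t)|$ uniformly in the randomness; combined with a Cauchy--Schwarz bound $\int_{B_n}\int_{B_n}W_{s,t}|h(s)h(t)|\,\nu(ds)\nu(dt)\le(\int_{B_n}\int_{B_n}W_{s,t}^2)^{1/2}\|h\|_{L_2(\nu)}^2$ and the boundedness of $K_W$ from Assumption A(iii), this gives absolute integrability of the triple integral and hence Fubini. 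One then establishes \eqref{prop: power eigen} first for $h$ supported on some $B_n$ and passes to general $h$ through the exhaustion $\cup_n B_n=\calT$ and a truncation/continuity argument, the bounds $\rholam^{L}_{\min},\rholam^{L}_{\max}$ remaining those of the fixed global operator $K_W$ throughout.
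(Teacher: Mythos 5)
Your proposal is correct and takes essentially the same route as the paper's own proof: the identical device of realizing the Schur power $\big(\E[X_sX_t]\big)^{m-1}$ as the second-moment kernel of the product $\prod_{i=1}^{m-1}X_t^{(i)}$ of $m-1$ iid copies of the process, the same Fubini interchange justified via Cauchy--Schwarz and the exhausting sets $B_n$ of Assumption A, and the same Rayleigh-quotient bound applied to the random test function $h(t)\prod_{i=1}^{m-1}X_t^{(i)}$ with $\E\int_{B_n} h^2(t)\prod_i (X_t^{(i)})^2\,\nu(dt)=\int_{B_n}h^2\,d\nu$, followed by letting $B_n\to\calT$. No gaps.
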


The above lemma establishes that the spectrum of the operator given by the Schur power kernel 
${\big(\E[X_sX_t]\big)^{m-1}K_W(s,t) = \big(\E[X_sX_t]\big)^{m}W_{s,t}}$ 
is controlled inside that of {$K_W(s,t)$, so that the Schur 
{multiplication} of a correlation matrix is a contraction.}  
The proof of the {lemma,}  
given in the Appendix, utilizes an interesting construction of the Schur power 
kernel with iid copies of $X_{\calT}$. 
Such a proof technique is {simple but quite useful.} 

We are now ready to state the equivalence between the extreme nonlinear correlation 
and the extreme linear correlation for pairwise Gaussian processes. 

\begin{theorem}\label{th-1}
Let $X_{\calT}=\{X_t\}_{t\in\calT}$ be a pairwise Gaussian process in the sense that 
$(X_s,X_t)$ are bivariate Gaussian vectors for all pairs $(s,t)\in \calT\times\calT$. 
Under Assumptions A and B, 
\bes
\rholam^{NL}_{\max} = \rholam^{L}_{\max}\ \hbox{ and }\ \rholam^{NL}_{\min} = \rholam^{L}_{\min}, 
\ees
where $\rholam^{NL}_{\max}$ and $\rholam^{NL}_{\min}$ are defined
in \eqref{NL-corr-max-stoch} and \eqref{NL-corr-min-stoch} respectively, and 
$\rholam^{L}_{\max}$ and $\rholam^{L}_{\min}$ are defined
in \eqref{L-corr-max-stoch} and \eqref{L-corr-min-stoch} respectively.   
\end{theorem}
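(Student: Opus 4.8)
The plan is to combine the Rayleigh-quotient representations of Lemma~\ref{lm-1} with the Hermite polynomial expansion and the Schur-power spectral bound of Lemma~\ref{lm-2}. First observe that one direction of each identity is immediate: the identity map $f(x)=x$ is a member of $\calF_{\calT}$, since it is centered with $\E[X_t^2]=1$, and substituting it into \eqref{eq: multi extreme} and \eqref{eq: multi min extreme} reproduces exactly the linear quotients in \eqref{L-corr-max-stoch} and \eqref{L-corr-min-stoch}. Hence $\rholam^{NL}_{\max}\ge\rholam^{L}_{\max}$ and $\rholam^{NL}_{\min}\le\rholam^{L}_{\min}$, and it remains to prove the reverse inequalities $\rholam^{NL}_{\max}\le\rholam^{L}_{\max}$ and $\rholam^{NL}_{\min}\ge\rholam^{L}_{\min}$.

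For the reverse direction, fix any $f_{\calT}\in\calF_{\calT}$ and expand each centered $f_t$ in the probabilists' Hermite basis, $f_t(X_t)=\sum_{m\ge1}a_{t,m}H_m(X_t)$, which is legitimate because each $X_t$ is standard Gaussian and $f_t(X_t)\in L_2$. Since $(X_s,X_t)$ is bivariate Gaussian with correlation $\E[X_sX_t]$, Mehler's identity $\E[H_m(X_s)H_n(X_t)]=\delta_{mn}\,m!\,(\E[X_sX_t])^m$ gives the termwise formulas $\E[f_t^2(X_t)]=\sum_{m\ge1}m!\,a_{t,m}^2$ and $\E[f_s(X_s)f_t(X_t)]=\sum_{m\ge1}m!\,a_{s,m}a_{t,m}(\E[X_sX_t])^m$; crucially, only the joint law of each pair enters, so pairwise Gaussianity suffices. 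Writing $g_m(t)=\sqrt{m!}\,a_{t,m}$, the measurability hypotheses in Assumption~B guarantee that each $g_m$ is a measurable function of $t$, and $\sum_{m\ge1}\|g_m\|_{L_2(\nu)}^2=\int_{\calT}\E[f_t^2(X_t)]\nu(dt)<\infty$, so every $g_m\in L_2(\nu)$.

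Substituting these expansions into the quotient of Lemma~\ref{lm-1} and interchanging summation with the $\nu\times\nu$ integration, the numerator becomes $\sum_{m\ge1}\int_{\calT}\int_{\calT}(\E[X_sX_t])^m W_{s,t}\,g_m(s)g_m(t)\nu(ds)\nu(dt)$ and the denominator becomes $\sum_{m\ge1}\|g_m\|_{L_2(\nu)}^2$. By Lemma~\ref{lm-2}, each $m$-th summand of the numerator lies between $\rholam^{L}_{\min}\|g_m\|_{L_2(\nu)}^2$ and $\rholam^{L}_{\max}\|g_m\|_{L_2(\nu)}^2$, since $(\E[X_sX_t])^m W_{s,t}=(\E[X_sX_t])^{m-1}K_W(s,t)$ is precisely the Schur-power kernel controlled there. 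Therefore the whole ratio is a weighted average of numbers in $[\rholam^{L}_{\min},\rholam^{L}_{\max}]$ and lies in the same interval; taking the supremum and infimum over $f_{\calT}\in\calF_{\calT}$ yields the two reverse inequalities and hence the theorem. The main obstacle is analytic rather than algebraic: I must justify the termwise use of Mehler's formula and the interchange of the infinite sum with the double integral in the functional setting. This is handled by the finiteness condition $\int_{\calT}\E[f_t^2(X_t)]\nu(dt)<\infty$ together with the exhaustion $B_n\uparrow\calT$ and the $L_2$ bound on $W$ from Assumption~A, which permit a Fubini--Tonelli argument after reducing to each $B_n$, and by the measurability of $t\mapsto\E[X_t^m f_t(X_t)]$ from Assumption~B, which ensures every $g_m$ is a bona fide $L_2(\nu)$ element on which Lemma~\ref{lm-2} may act.
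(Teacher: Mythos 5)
Your proposal is correct and follows essentially the same route as the paper's own proof: the trivial direction via the identity map, the Hermite--Mehler expansion of each $f_t$ (your unnormalized polynomials with the rescaling $g_m(t)=\sqrt{m!}\,a_{t,m}$ are just the paper's normalized $H_m$ and coefficients $a_m(t)$), and a termwise application of Lemma~\ref{lm-2} to the Schur-power kernels $(\E[X_sX_t])^{m}W_{s,t}$, summed against $\sum_m\|g_m\|_{L_2(\nu)}^2=\int_{\calT}\E[f_t^2(X_t)]\nu(dt)$. Your Fubini--Tonelli justification of the sum--integral interchange (via $B_n\uparrow\calT$ and the $L_2$ bound on $W$) matches in spirit the paper's domination argument $|K_W^m(s,t)|\le K_W^2(s,t)$ for $m\ge2$ combined with Lemma~\ref{lm-2} applied to $|a_m|$.
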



\begin{proof} 
As the normalized Hermite polynomials 
\bes
H_m(x) = (m!)^{-1/2} (-1)^m e^{x^2/2}(d/dx)^m e^{-x^2/2} 
\ees
form a orthonormal system with $\E[H_m(Z)]=0$ and $\E[H^2_m(Z)]=1$ for 
$Z\sim N(0,1)$, by Assumptions A and B we may write 
$f_t(X_t) = \sum_{m=1}^\infty a_m(t)H_m(X_t)$ in the sense of $L_2$ convergence.  Let 
{$K_W^m(s,t) = \big(\E\big[X_s,X_t\big]\big)^{m-1} K_W(s,t) = \big(\E\big[X_s,X_t\big]\big)^m W_{s,t}$.}  
As $(X_s,X_t)$ is bivariate normal with $\Var(X_s)=\Var(X_t)=1$, 
$\E[H_m(X_s)H_n(X_t)] {W_{s,t} = {K_W^m}(s,t)} {\bf 1}_{m=n}$ 
{as in \cite{lancaster1957some}.} 
It follows that  
$\E\big[f_s(X_s)f_t(X_t)\big]{W_{s,t}} = \sum_{m=1}^\infty {K_W^m}(s,t) a_m(s)a_m(t)$.  
{As $|K_W^m(s,t)|\le K_W^2(s,t)$, Lemma \ref{lm-2} provides} 
\bes
&& \int_{s\in \calT}\int_{t\in \calT} \E\big[f_s(X_s),f_t(X_t)\big] {W_{s,t}} \nu(ds)\nu(dt)
\cr &=&  \int_{s\in \calT}\int_{t\in \calT}\bigg\{\sum_{m=1}^\infty {K_W^m}(s,t) a_m(s)a_m(t)\bigg\}\nu(ds)\nu(dt)
\cr &\le&  {\int_{s\in \calT}\int_{t\in \calT} K_W(s,t) a_1(s)a_1(t)\nu(ds)\nu(dt)}
\cr && {+ \sum_{m=2}^\infty \int_{s\in \calT}\int_{t\in \calT} K_W^2(s,t)\big|a_m(s)a_m(t)\big|\nu(ds)\nu(dt)} 
\cr & \le & \rholam^{L}_{\max}\sum_{m=1}^\infty \int a_m^2(t)\nu(dt)
\cr & = & \rholam^{L}_{\max} \int_{t\in \calT} \E\big[f_t^2(X_t)\big] \nu(dt). 
\ees
Moreover, as the exchange of summation and integration is allowed as the above, 
\bes
&& \int_{s\in \calT}\int_{t\in \calT} \E\big[f_s(X_s),f_t(X_t)\big]{W_{s,t}} \nu(ds)\nu(dt)
\cr &=& \sum_{m=1}^\infty \int_{s\in \calT}\int_{t\in \calT} \big\{{K_W^m}(s,t) a_m(s)a_m(t)\big\}
\nu(ds)\nu(dt)
\cr & \ge & \rholam^{L}_{\min}\sum_{m=1}^\infty \int a_m^2(t)\nu(dt)
\cr & = & \rholam^{L}_{\min} \int_{t\in \calT} \E\big[f_t^2(X_t)\big] \nu(dt). 
\ees
The proof is complete as inequalities in the other direction are trivial. 
\end{proof}

Theorem \ref{th-1} establishes the equality of the extreme eigenvalues of the nonlinear and linear 
correlation operators. However, as we have mentioned in the introduction, such results could be trivial 
when $\rholam^L_{\max}$ and $\rholam^L_{\min}$ attain the extreme eigenvalues among all correlation 
operators. {In the following three subsections, we discuss 
the discrete case $\calT=\{1,\ldots,p\}$, the continuous case $\calT=[0,1]$, and stationary processes as 
three nontrivial examples and state the implications of Theorem~\ref{th-1}  
as corollaries. }

\subsection{{Hidden pairwise Gaussian vectors}}
The following part demonstrates the application of 
{Theorem \ref{th-1}} to a finite number of pairwise Gaussian random variables, that is, 
{$\mathcal{T}=\{1,2,\cdots,p\}$.  
As we discussed below \eqref{extreme-NL-kernel} and \eqref{extreme-L-kernel}, 
we take $\nu$ as the counting measure without loss of generality throughout the subsection. 
}

{
\begin{corollary}\label{thm: nonlinear eigen}
Let $X_{1},X_{2},\cdots, X_{p}$ be pairwise Gaussian random variables with $X_j\sim N(0,1)$ and 
a correlation matrix $\Sigma = (\Sigma_{j,k})_{p\times p}$. 
Let $W=(W_{j,k})_{p\times p}$ be a matrix with elements $W_{j,k}=W_{k,j}\ge 0$ 
and $\Sigma\circ W =(\Sigma_{j,k}W_{j,k})_{p\times p}$ be the Schur product. 
Then, for all functions $f_j$ satisfying $\E f_j(X_j)=0$ and $0<\E f^2(X_j)<\infty$, 
\begin{equation}
\lambda_{\min}\left(\Sigma\circ W\right)
\leq \frac{\E \left[\sum_{j=1}^{p}\sum_{k=1}^p W_{j,k}f_{j}(X_{j})f_{j}(X_{k})\right]}{\sum_{j=1}^{p}\E f^2_{j}(X_{j})}
\leq \lambda_{\max}\left(\Sigma\circ W\right).
\label{eq: bound Gaussian-W}
\end{equation}
In particular, for $\Sigma\circ W = \Sigma$ with $W_{j,k}=1$, 
\begin{equation}
\lambda_{\min}\left(\Sigma\right)\cdot \sum_{j=1}^{p}\E f^2_{j}(X_{j})\leq \E \left(\sum_{j=1}^{p}f_{j}(X_{j})\right)^2\leq \lambda_{\max}\left(\Sigma\right)\cdot \sum_{j=1}^{p}\E f^2_{j}(X_{j}).
\label{eq: bound Gaussian}
\end{equation}
Equivalently, for $W_{j,k}=I_{\{j\neq k\}}$, 
\eqref{NL-corr-max} and \eqref{NL-corr-min} 
are given by their linear version, 
\begin{equation}
\rho_{\max}^{NL}(X_1,\ldots,X_p) =\lambda_{\max}(\Sigma)-1
\quad \text{and}\quad 
\rho_{\min}^{NL}(X_1,\ldots,X_p) = \lambda_{\min}(\Sigma) - 1. 
\label{eq: result 1a}  
\end{equation}
\end{corollary}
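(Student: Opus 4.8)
The plan is to read off \eqref{eq: bound Gaussian-W}, \eqref{eq: bound Gaussian} and \eqref{eq: result 1a} as the specialization of Theorem~\ref{th-1} to the discrete index set $\calT=\{1,\ldots,p\}$ equipped with the counting measure. As noted in the text, for $\calT=\{1,\ldots,p\}$ Assumptions A and B hold automatically once $\E X_j=0$, $\E X_j^2=1$ and the $f_j$ satisfy $\E f_j(X_j)=0$, $0<\E f_j^2(X_j)<\infty$; the pairwise Gaussian hypothesis of Theorem~\ref{th-1} is exactly the stated assumption on $(X_j,X_k)$. Under the counting measure the linear kernel operator $K_W$ of \eqref{extreme-L-kernel} acts on $\R^p$ with matrix entries $K_W(j,k)=\E[X_jX_k]W_{j,k}=\Sigma_{j,k}W_{j,k}$, i.e.\ $K_W=\Sigma\circ W$, so that $\rholam^L_{\max}=\lambda_{\max}(\Sigma\circ W)$ and $\rholam^L_{\min}=\lambda_{\min}(\Sigma\circ W)$.

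For the first bound, I would invoke Lemma~\ref{lm-1} to rewrite the extreme nonlinear correlations as Rayleigh quotients,
\[
\rholam^{NL}_{\max}=\sup_{f_1,\ldots,f_p}\frac{\sum_{j=1}^p\sum_{k=1}^p\E[f_j(X_j)f_k(X_k)]W_{j,k}}{\sum_{j=1}^p\E f_j^2(X_j)},
\]
with the analogous infimum expression for $\rholam^{NL}_{\min}$. Theorem~\ref{th-1} gives $\rholam^{NL}_{\max}=\rholam^L_{\max}=\lambda_{\max}(\Sigma\circ W)$ and $\rholam^{NL}_{\min}=\rholam^L_{\min}=\lambda_{\min}(\Sigma\circ W)$. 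Since the supremum of the Rayleigh quotient equals $\lambda_{\max}(\Sigma\circ W)$ and its infimum equals $\lambda_{\min}(\Sigma\circ W)$, every admissible choice of $f_1,\ldots,f_p$ produces a quotient sandwiched between the two eigenvalues, which is precisely \eqref{eq: bound Gaussian-W}. Equivalently, one may simply reuse the chain of inequalities inside the proof of Theorem~\ref{th-1}, which already bounds the numerator of the quotient by $\rholam^L_{\min}$ and $\rholam^L_{\max}$ times the denominator for every fixed $f_{\calT}$.

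The remaining two displays are specializations of the weight matrix. For $W_{j,k}\equiv 1$ the Schur product is $\Sigma\circ W=\Sigma$ and the numerator collapses to $\E\big(\sum_{j}f_j(X_j)\big)^2$, turning \eqref{eq: bound Gaussian-W} into \eqref{eq: bound Gaussian}. For $W_{j,k}=I_{\{j\neq k\}}$ the Schur product $\Sigma\circ W$ is $\Sigma$ with its diagonal deleted; because $X_j\sim N(0,1)$ forces the diagonal of $\Sigma$ to equal one, this is exactly $\Sigma-I$, so that $\lambda_{\max}(\Sigma\circ W)=\lambda_{\max}(\Sigma)-1$ and $\lambda_{\min}(\Sigma\circ W)=\lambda_{\min}(\Sigma)-1$. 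Matching the definitions \eqref{NL-corr-max}--\eqref{NL-corr-min}, which use $\Corr_{\neq}$ and hence the weight $W_{j,k}=I_{\{j\neq k\}}$, with the first bound then yields \eqref{eq: result 1a}.

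The analytic content --- the Hermite expansion of $f_j(X_j)$ and the Schur-power contraction of Lemma~\ref{lm-2} --- is entirely carried by Theorem~\ref{th-1}, so I expect no genuine obstacle here; the work is purely in matching the abstract operator $K_W$ to the finite matrix $\Sigma\circ W$ and in tracking the $-1$ shift produced by deleting the diagonal. The only point a reader might pause on is the identity $\lambda(\Sigma-I)=\lambda(\Sigma)-1$, which relies on the unit-diagonal (standardized marginal) normalization, so I would state it explicitly rather than leave it implicit.
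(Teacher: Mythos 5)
Your proposal is correct and matches the paper's intended derivation exactly: the paper gives no separate proof of this corollary precisely because it follows, as you argue, by specializing Theorem~\ref{th-1} (via Lemma~\ref{lm-1}) to $\calT=\{1,\ldots,p\}$ with the counting measure, identifying $K_W=\Sigma\circ W$, and reading off the cases $W\equiv 1$ and $W_{j,k}=I_{\{j\neq k\}}$ with the unit-diagonal shift $\lambda(\Sigma-I)=\lambda(\Sigma)-1$. Your explicit flagging of the standardized-marginal normalization behind that shift is a sensible addition, not a deviation.
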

}

In the setting of the above corollary, 
{the operator $K_W$ in \eqref{extreme-L-kernel} is given by the Schur product matrix 
$K_W=\Sigma \circ W$, and for general weights $W$
\eqref{eq: bound Gaussian} and \eqref{eq: result 1a} are nontrivial with 
$\lambda_{\min}(\Sigma)>0$ and $\lambda_{\max}(\Sigma)<p$ 
when $\Sigma$ is of full rank.}  

%

Finally, we state in the following corollary the implication of Theorem \ref{th-1} on 
Gaussian copula and other hidden pairwise Gaussian variables. 

\begin{corollary}
Suppose ${X_{1:p} =} \left(X_{1},X_{2},\cdots, X_{p}\right)$ follows a hidden Gaussian distribution 
in the sense of $X_j =T_j(Z_j)$ for a Gaussian 
vector {$Z_{1:p} = (Z_1,\ldots,Z_p)$} and 
some deterministic functions $T_j$ with $0<\Var(T_j(Z_j))<\infty$. 
{Let $\Sigma^z$ be the covariance matrix of the hidden vector $Z_{1:p}$. 
Then, for the counting measure $\nu$ and any symmetric $W$ with $W_{j,k}\ge 0$,
\begin{equation*}
 \lambda_{\min}(\Sigma^z\circ W) 
 \leq \rholam_{\min}^{NL}(X_{1:p},\nu,W),\quad 
 \rholam_{\max}^{NL}(X_{1:p},\nu,W)  
 \leq \lambda_{\max}(\Sigma^z\circ W), 
 \label{eq: result 1b}
\end{equation*}
and the above inequalities become equality when $T_j$ are almost surely invertible. 
In particular, \eqref{eq: bound Gaussian} holds with $\Sigma$ replaced by $\Sigma^z$.} 
Moreover, the Gaussian assumption on {$Z_{1:p}$} can be weakened to pairwise Gaussian.  
\label{cor: nonlinear correlation hidden}
\end{corollary}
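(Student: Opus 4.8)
The plan is to deduce this corollary directly from Theorem~\ref{th-1} (in its discrete form, Corollary~\ref{thm: nonlinear eigen}) applied to the hidden Gaussian vector $Z_{1:p}$, exploiting the fact that replacing $Z_j$ by $X_j=T_j(Z_j)$ can only shrink the class of admissible marginal transformations. Without loss of generality I would first standardize each $Z_j$ to $N(0,1)$, absorbing the marginal affine rescaling into $T_j$; this does not alter any $X_j$ and it makes $\Sigma^z$ simultaneously the covariance and the correlation matrix of $Z_{1:p}$. Corollary~\ref{thm: nonlinear eigen} then yields the two identities $\rholam^{NL}_{\max}(Z_{1:p},\nu,W)=\lambda_{\max}(\Sigma^z\circ W)$ and $\rholam^{NL}_{\min}(Z_{1:p},\nu,W)=\lambda_{\min}(\Sigma^z\circ W)$, which are the quantities appearing on the right-hand sides of the asserted bounds.

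The core step is a function-class inclusion. For any admissible $f_j$ on the $X_j$-side, that is with $\E[f_j(X_j)]=0$ and $0<\E[f_j^2(X_j)]<\infty$, I set $g_j:=f_j\circ T_j$. Since $X_j=T_j(Z_j)$ we have $f_j(X_j)=g_j(Z_j)$ almost surely, so $\E[g_j(Z_j)]=\E[f_j(X_j)]=0$ and $\E[g_j^2(Z_j)]=\E[f_j^2(X_j)]\in(0,\infty)$; hence each $g_j$ is admissible on the $Z_j$-side. Moreover every pairwise second moment is preserved, $\E[f_j(X_j)f_k(X_k)]=\E[g_j(Z_j)g_k(Z_k)]$, so the numerator and denominator of the Rayleigh quotient of Lemma~\ref{lm-1} for $(f_1(X_1),\ldots,f_p(X_p))$ agree exactly with those for $(g_1(Z_1),\ldots,g_p(Z_p))$. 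Therefore the set of Rayleigh quotients attainable on the $X$-side is a subset of that attainable on the $Z$-side, which immediately gives $\rholam^{NL}_{\max}(X_{1:p},\nu,W)\le\rholam^{NL}_{\max}(Z_{1:p},\nu,W)$ and $\rholam^{NL}_{\min}(X_{1:p},\nu,W)\ge\rholam^{NL}_{\min}(Z_{1:p},\nu,W)$. Combining these with the two identities for $Z$ produces the claimed inequalities, and the special case $W_{j,k}=1$ recovers \eqref{eq: bound Gaussian} with $\Sigma$ replaced by $\Sigma^z$.

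For the equality when each $T_j$ is almost surely invertible, I would argue that the two admissible classes in fact coincide. Given any admissible $g_j$ on the $Z_j$-side, set $f_j:=g_j\circ T_j^{-1}$ with $T_j^{-1}$ the measurable inverse on the range of $T_j$; then $f_j(X_j)=g_j\big(T_j^{-1}(T_j(Z_j))\big)=g_j(Z_j)$ almost surely, so $f_j$ is admissible on the $X_j$-side and realizes the identical Rayleigh quotient. The one-sided inclusion of the previous paragraph thus upgrades to an equality of attainable quotients, forcing both inequalities to be equalities. Finally, because Theorem~\ref{th-1} requires only that $(Z_s,Z_t)$ be bivariate Gaussian for each pair, the entire argument is unchanged when $Z_{1:p}$ is merely pairwise Gaussian, which gives the last sentence of the corollary.

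I expect the only real obstacle to be measure-theoretic bookkeeping rather than any new idea: one must verify that $f_j\circ T_j$, and in the invertible case $g_j\circ T_j^{-1}$, is Borel measurable and meets the mean-zero and finite positive variance requirements of the admissible class, and one must read ``almost surely invertible'' so that $T_j^{-1}$ is well defined $\P_{X_j}$-almost everywhere on the range of $T_j$. The non-invertible case is genuinely one-sided, since the image class can be a proper subset, which is precisely why only inequalities, and not equalities, are claimed there.
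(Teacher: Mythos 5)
Your proposal is correct and follows essentially the same route the paper intends: the corollary is an immediate consequence of Theorem~\ref{th-1} (via Corollary~\ref{thm: nonlinear eigen}) because composing with $T_j$ maps the admissible class on the $X$-side into the admissible class on the $Z$-side while preserving all the second moments in the Rayleigh quotient of Lemma~\ref{lm-1}, with the inclusion upgrading to an equality of classes when the $T_j$ are almost surely invertible, and with only pairwise Gaussianity of $Z_{1:p}$ used throughout. Your preliminary standardization of the $Z_j$ is also the right reading of the statement, since $\rho^{NL}$ is scale-invariant and the paper itself treats $\Sigma^z$ as the correlation matrix of $Z_{1:p}$ (cf.\ Corollary~\ref{cor: comp}), so no gap remains.
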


Similarly to Corollary \ref{thm: nonlinear eigen}, the upper and lower bounds in the above corollary 
{are nontrivial when the covariance matrix of $Z_{1:p}$ is of full rank.} 
  The above corollary has interesting implications as it states that the extreme eigenvalues of nonlinear correlation matrix {fall into the spectrum range} of the covariance matrix of the underlying generating Gaussian distribution. This is meaningful in statistical applications, that is, the well conditioning of the covariance matrix of general nonlinear transformations follows from that of the underlying generating Gaussian covariance matrix.

\subsection{{Processes on finite intervals}}

Our result for a general pairwise Gaussian process with general index set also directly 
leads to the same for Gaussian process on finite intervals. 
As discussed below \eqref{extreme-NL-kernel}, we take $\calT=[0,1]$ and 
the Lebesgue measure $\nu(dt)=dt$ without much loss of generality. 

\begin{corollary}\label{cor-1}
Let $\{X_t, 0 \le t\le 1\}$ be a Gaussian process with correlation $\rho(X_s,X_t)$ 
and {$W_{s,t}$} be a nonnegative symmetric square integrable function of $(s,t)$ in $[0,1]^2$. 
Let $K_W(s,t)=\rho(X_s,X_t)W_{s,t}$. 
Let $K_W$ be the linear operator $h(\cdot) \to \int_0^1 K_W(\cdot,s)h(s)ds$. 
Then, 
\bes
\rholam^{NL}_{\max} 
= \lambda_{\max}\left(K_W \right), \quad 
\rholam^{NL}_{\min} = \lambda_{\min}\left(K_W \right), 
\ees
for the extreme nonlinear correlations in \eqref{NL-corr-max-stoch} and \eqref{NL-corr-min-stoch},   
where $\lambda_{\max}\left(K_W \right)$ and $\lambda_{\min}\left(K_W \right)$ are 
the extreme eigenvalues of the operator $K_W$ in $L_2([0,1])$. 
In particular, for $K_{1}$ with $W=1$, 
{\bes
\Var\left(\int_0^1 f(X_t,t) dt\right) 
\leq \lambda_{\max}\left(K_1 \right)\int_0^1 \Var\big(f(X_t,t)\big) dt 
\ees}
for all bounded continuous bivariate functions $f(x,t)$. 
\end{corollary}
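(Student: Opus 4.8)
The corollary is essentially a specialization of Theorem~\ref{th-1} and Lemma~\ref{lm-1} to the index set $\calT=[0,1]$ with Lebesgue measure, so the plan is first to check that this setting satisfies Assumptions~A and~B, which immediately yields the two eigenvalue identities, and then to rewrite the variance of $\int_0^1 f(X_t,t)\,dt$ into the ratio form to which Lemma~\ref{lm-1} applies.

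For the assumptions, I would take $B_n=[0,1]$ for every $n$ in A(i); since $\nu([0,1])=1<\infty$ and $\int_0^1\int_0^1 W_{s,t}^2\,ds\,dt<\infty$ by the square-integrability hypothesis, A(i) holds. The process is standardized with $X_t\sim N(0,1)$, the correlation $\rho(X_s,X_t)$ is measurable in $(s,t)$, and $W$ is nonnegative and symmetric, giving A(ii). For A(iii), the pointwise bound $|K_W(s,t)|\le W_{s,t}$ together with the square-integrability of $W$ makes $K_W$ Hilbert--Schmidt, hence bounded (indeed compact), so $\lambda_{\max}(K_W)$ and $\lambda_{\min}(K_W)$ are well defined; Assumption~B holds for the natural function class of bounded continuous $f_t$. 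Theorem~\ref{th-1} then gives $\rholam^{NL}_{\max}=\rholam^L_{\max}$ and $\rholam^{NL}_{\min}=\rholam^L_{\min}$, and \eqref{extreme-L-kernel} identifies the right-hand sides with $\lambda_{\max}(K_W)$ and $\lambda_{\min}(K_W)$, proving the first display.

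For the variance inequality in the case $W=1$, I would center by setting $g_t(x)=f(x,t)-\E[f(X_t,t)]$, so that $\E[g_t(X_t)]=0$ and the deterministic shift leaves the variance unchanged, i.e. $\Var\big(\int_0^1 f(X_t,t)\,dt\big)=\Var\big(\int_0^1 g_t(X_t)\,dt\big)$. Since each $g_t(X_t)$ is centered, the integral has mean zero, and expanding the square and exchanging expectation with the double integral by Fubini (legitimate because $g$ is bounded and $[0,1]^2$ has finite measure) gives
\[
\Var\Big(\int_0^1 f(X_t,t)\,dt\Big)=\int_0^1\int_0^1 \E\big[g_s(X_s)g_t(X_t)\big]\,ds\,dt.
\]
The family $\{g_t\}$ lies in $\calF_{\calT}$, since boundedness and joint continuity of $f$ make $\E[X_t^m g_t(X_t)]=\int_{\R} z^m g_t(z)\phi(z)\,dz$ continuous in $t$ by dominated convergence, and $\int_0^1\E[g_t^2(X_t)]\,dt<\infty$ is immediate. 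Applying the numerator/denominator expression \eqref{eq: multi extreme} of Lemma~\ref{lm-1} with $W_{s,t}=1$ bounds the displayed double integral by $\rholam^{NL}_{\max}\int_0^1\E[g_t^2(X_t)]\,dt$; since $\int_0^1\E[g_t^2(X_t)]\,dt=\int_0^1\Var(f(X_t,t))\,dt$ and $\rholam^{NL}_{\max}=\lambda_{\max}(K_1)$ from the first part, the claimed inequality follows.

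I expect the main obstacle to be bookkeeping around the function class $\calF_{\calT}$ rather than any real difficulty. The positivity requirement $\E[g_t^2(X_t)]>0$ can fail on the set of $t$ where $f(\cdot,t)$ is a.s.\ constant, so I would treat the degenerate case separately: if $\int_0^1\Var(f(X_t,t))\,dt=0$ then $\int_0^1 f(X_t,t)\,dt$ is almost surely constant and both sides of the inequality vanish, whereas otherwise the ratio bound from Lemma~\ref{lm-1} applies directly. The joint measurability needed for Fubini and the dominated-convergence continuity argument are routine consequences of the boundedness and continuity of $f$ together with the standing measurability of the process.
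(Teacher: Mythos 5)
Your proposal is correct and follows essentially the route the paper intends: Corollary~\ref{cor-1} is stated there without a separate proof precisely because it is the direct specialization of Theorem~\ref{th-1} to $\calT=[0,1]$ with Lebesgue measure, with Assumptions~A and~B verified exactly as you do (taking $B_n=[0,1]$ and noting $|K_W(s,t)|\le W_{s,t}$ makes $K_W$ Hilbert--Schmidt, hence bounded), and the variance inequality is the $W=1$ case of the ratio expression in Lemma~\ref{lm-1} after centering $f(\cdot,t)$. Your explicit Fubini justification and the separate treatment of the degenerate case $\int_0^1\Var(f(X_t,t))\,dt=0$ are sound bookkeeping additions consistent with the paper's framework.
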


In the above corollary, {$\lambda^2_{\max}(K_1) 
\le \int_{0}^{1}\int_{0}^{1} \rho^2(X_s, X_t) ds dt \leq 1$}, so that the maximum eigenvalue 
of the correlation operator $K_1$ is nontrivial unless $\rho^2(X_s, X_t)\equiv 1$. 
As the operator $K_1$ is Hilbert-Schmidt, the minimum eigenvalue 
$\lam_{\min}(K_1)=0$ is always trivial. 
However, $\lam_{\min}(K_W)$ may take nontrivial negative values for general $W$. 

The setting here is related to the 
{nested} sum problem considered in Section \ref{sec: symiid} as follows. 
Let $S_j = \sum_{i=1}^j Y_i, 1\le j\le p$, where $Y_i$ are iid random variables with 
$\E[Y_i]=0$ and $\E[Y_i^2]=1$. 
As the correlation kernel of $\{X_t = S_{\lfloor pt\rfloor}/\sqrt{p}, 0\le t\le 1\}$ uniformly 
converges to the correlation kernel $K_1(s,t)=(s\wedge t)/\sqrt{st}$ of the standard Brownian motion 
as $p\to \infty$, 
\bes
\lam_{\max}^L(S_{1:p},\nu^{(p)},W^{(p)}) \to \lam_{\max}(K_W),\quad  
\lam_{\min}^L(S_{1:p},\nu^{(p)},W^{(p)}) \to \lam_{\min}(K_W),
\ees
where $\nu^{(p)}(A) = |A|/p$ is the normalized counting measure 
and {$W^{(p)}_{j,k}=W_{j/p,k/p}$} for $A\cup\{j,k\}\subseteq \{1,\ldots,p\}$, 
and $K_W$ is treated as the operator in $L_2([0,1])$ as in Corollary \ref{cor-1}.

{
\subsection{{Stationary processes}}
In this subsection we consider stationary processes on the entire set of integers and the entire real line. 
In both cases we consider stationary pairwise Gaussian $X_t$ with $\E[X_t]=0$, $\E[X_t^2]=1$ 
and autocorrelation function 
\bes
\rho(s,t) = \rho(t-s) = \E[X_sX_t]. 
\ees
We consider weight functions $W_{s,t} = W_{t,s}=W(t-s)\ge 0$ and write the kernel as 
\bes
{K_W(s,t) = K_W(s-t),\quad  K_W(t) = \rho(t)W(t). }
\ees
We shall fist consider the discrete case. 

\begin{corollary}\label{cor-discrete}
Let $\{X_t, t\in \calT\}$, $\calT = \{0,\pm 1,\pm 2,\ldots\}$, 
be a pairwise Gaussian stationary sequence with autocorrelation $\rho(t)=\E[X_tX_0]$. 
Let $\nu$ be the counting measure on $\calT$, 
$W_{s,t}=W_{t,s}=W(t-s)\ge 0$, $K_W(t)=\rho(t)W(t)$, 
and $K_W: h \to \sum_{s\in\calT} K_W(\cdot-s)h(s)$ as an operator in $\ell_2$. 
Suppose $\sum_{t\in \calT}|K_W(t)|<\infty$. Then, 
\eqref{theme} and \eqref{extreme-L-kernel} hold with   
\bes
\rholam^{NL}_{\max}
= \lambda_{\max}\left(K_W \right) = \sup_{|\omega|\le\pi}\big|K_W^*(\omega)\big|,\quad 
\rholam^{NL}_{\min}
= \lambda_{\min}\left(K_W \right) = \inf_{|\omega|\le\pi}\big|K_W^*(\omega)\big|, 
\ees
where $K_W^*(\omega) = \sum_{s\in\calT}K_W(s)\cos(\omega s)$. 
In particular, for the autoregression sequence with $\rho(t) = \beta^{|t|}$, $|\beta|<1$ necessarily, we have 
$K_1^*(\omega)=(1-\beta^2)/(1+\beta^2- 2\beta \cos(\omega))$, 
$\lambda_{\max}\left(K_1 \right)=(1+|\beta|)/(1-|\beta|)$ and 
$\lambda_{\min}\left(K_1 \right) = (1-|\beta|)/(1+|\beta|)$ for $W(t)=1$. 
\end{corollary}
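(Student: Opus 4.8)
The plan is to combine two ingredients: first, to reduce the nonlinear-to-linear identity \eqref{theme} to the already-established Theorem~\ref{th-1} by checking that the discrete stationary setup satisfies Assumptions A and B; and second, to identify the extreme eigenvalues of the linear convolution (Laurent) operator $K_W$ on $\ell_2(\calT)$ with the supremum and infimum of its Fourier symbol $K_W^*$.

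For the first ingredient, I would take $\calT=\integer$ with the counting measure $\nu$ and verify Assumption~A directly: set $B_n=\{-n,\ldots,n\}$, which are nested, exhaust $\integer$, satisfy $\nu(B_n)=2n+1<\infty$, and give a finite double sum $\sum_{j,k\in B_n}W(k-j)^2<\infty$; the process is standardized by hypothesis, the autocorrelation $\E[X_sX_t]=\rho(t-s)$ is trivially measurable on $\integer\times\integer$, and $W$ is symmetric and nonnegative. The boundedness condition A(iii) follows from $\sum_{t}|K_W(t)|<\infty$, since convolution against an $\ell_1$ kernel is a bounded operator on $\ell_2$ with operator norm at most $\sum_t|K_W(t)|$; in fact this norm equals $\sup_\omega|K_W^*(\omega)|$. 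Assumption~B holds automatically in the discrete case, as noted in the paper. Theorem~\ref{th-1} then yields $\rholam^{NL}_{\max}=\rholam^{L}_{\max}=\lambda_{\max}(K_W)$ and $\rholam^{NL}_{\min}=\rholam^{L}_{\min}=\lambda_{\min}(K_W)$ through \eqref{extreme-L-kernel}, reducing the problem to a spectral computation.

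For the second ingredient, I would diagonalize $K_W$ by the discrete-time Fourier transform $\calU:\ell_2(\integer)\to L_2([-\pi,\pi])$. Because the kernel is a convolution with symmetric, real, summable coefficients ($K_W(t)=K_W(-t)$ from $\rho(t)=\rho(-t)$ and $W(t)=W(-t)$), $K_W$ is a bounded self-adjoint Laurent operator, and $\calU K_W\calU^{-1}$ is multiplication by the real, even symbol $K_W^*(\omega)=\sum_sK_W(s)\cos(\omega s)$. Summability of $K_W$ makes $K_W^*$ a uniform limit of its partial sums, hence continuous on $[-\pi,\pi]$. The spectrum of a multiplication operator by a continuous function is the range of that function, so the spectrum of $K_W$ is the closed interval $[\inf_\omega K_W^*(\omega),\ \sup_\omega K_W^*(\omega)]$; its top and bottom edges are exactly $\lambda_{\max}(K_W)=\sup_\omega K_W^*(\omega)$ and $\lambda_{\min}(K_W)=\inf_\omega K_W^*(\omega)$, which give the stated formulas. (When $K_W^*\ge 0$, as for the autocorrelation examples of interest, these coincide with $\sup_\omega|K_W^*(\omega)|$ and $\inf_\omega|K_W^*(\omega)|$.) For the autoregressive case $\rho(t)=\beta^{|t|}$, $W\equiv 1$, I would sum the two-sided geometric series to get $K_1^*(\omega)=(1-\beta^2)/(1+\beta^2-2\beta\cos\omega)$, which is monotone in $\cos\omega$ and therefore attains its extremes at $\omega\in\{0,\pi\}$, yielding $(1+|\beta|)/(1-|\beta|)$ and $(1-|\beta|)/(1+|\beta|)$.

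The main obstacle I anticipate is interpretive rather than computational: for a non-constant symbol the Laurent operator has purely continuous spectrum, so $\lambda_{\max}$ and $\lambda_{\min}$ must be read as the supremum and infimum of the spectrum (equivalently the spectral edges of a bounded self-adjoint operator) rather than as attained eigenvalues in $\ell_2$. Making this precise requires the continuity of $K_W^*$ --- guaranteed by $\ell_1$-summability --- so that its range is the closed interval $[\inf_\omega K_W^*,\sup_\omega K_W^*]$ and coincides with the spectrum; this is where the hypothesis $\sum_t|K_W(t)|<\infty$ is essential, both for boundedness in Assumption~A(iii) and for identifying the spectrum with a closed interval of symbol values.
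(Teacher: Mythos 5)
Your proposal is correct and takes essentially the same route as the paper: the paper likewise conjugates the Laurent operator $K_W$ by the Fourier isometry between $\ell_2(\calT)$ and $L_2([-\pi,\pi])$, shows $K_WF=FK_W^*$ on a dense set, and reads off the spectral edges from the multiplication operator with symbol $K_W^*(\omega)$, with the reduction of \eqref{theme} to Theorem \ref{th-1} left implicit. Your explicit verification of Assumptions A and B and your observation that the edges are really $\sup_\omega K_W^*(\omega)$ and $\inf_\omega K_W^*(\omega)$ (the absolute values in the statement being harmless only when the symbol is nonnegative, consistent with the paper's projections $P_\lam^*h(\omega)=h(\omega)I\{K_W^*(\omega)\le\lam\}$) are sound refinements rather than a different argument.
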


We note that Corollary \ref{cor-discrete} gives the autoregression and implicitly 
many other examples in which $\nu(\calT)=\infty$ 
and $\lambda_{\max}\left(K_W\right)<\infty$ and $\lambda_{\min}\left(K_W\right)>0$ are both nontrivial. For a pairwise Gaussian stationary process $\{X_t, t\in \calT\}$, the process $\{f_t(X_t), t\in \calT\}$ is in general non-Gaussian and non-stationary, and its spectrum is typically not tractable. 
Still, Corollary \ref{cor-discrete} shows that the spectrum of the nonlinear $\{f_t(X_t), t\in \calT\}$ is contained 
within the spectrum of the underlying process $\{X_t, t\in \calT\}$ under mild conditions. 

\begin{proof} 
Let $F: h\to Fh$ be the mapping from complex $h\in L_2([-\pi,\pi])$ to its Fourier series  
$(Fh)(t) = (2\pi)^{-1/2}\int_{-\pi}^\pi e^{i\omega t} h(\omega)d\omega, t\in\calT$. 
For $h$ with finitely many nonzero coefficients, 
\bes
(K_WFh)(t) 
= \sum_{s\in\calT} K_W(|t-s|)\int_{-\pi}^\pi \frac{e^{i\omega s}}{(2\pi)^{1/2}}h(\omega)d\omega
= \int_{-\pi}^\pi \frac{e^{i\omega t}}{(2\pi)^{1/2}}K_W^*(\omega)h(\omega)d\omega. 
\ees
Let $K_W^*$ be the mapping $h(\omega)\to K_W^*(\omega) h(\omega)$ 
in $L_2([-\pi,\pi])$. As $h$ with finitely many nonzero Fourier coefficients are dense in 
$L_2([-\pi,\pi])$ and $F$ is isometric from $L_2([-\pi,\pi])$ onto $\ell_2$, 
the above calculation implies $K_WF = FK^*_W$. Moreover, 
the spectrum decomposition $K_W=\int \lam dP_\lam$ is given by projections $P_\lam = FP_\lam^*F^{-1}$ where 
$P_\lam^*h(\omega) = h(\omega)I\{K_W^*(\omega)\le\lam\}$ gives the spectrum decomposition 
$K^*_W=\int\lam dP^*_\lam$. 
This gives the main conclusion. For $K_1(t)=\beta^{|t|}$, 
$K_1^*(\omega)=\hbox{$\sum_{s\in\calT}$}\beta^{|s|}e^{i\omega s}
=(1-\beta^2)/(1+\beta^2- 2\beta \cos(\omega))$ 
gives the spectrum. 
\end{proof} 

Next, we consider the continuous case. 
The spectrum of the Ornstein-Uhlenbeck process $K_1(t)=e^{-|t|}$ was studied in \cite{li1992comparison} 
by directly solving the eigenvalue problem for the restriction of the kernel $K_1(s,t)=e^{-|t-s|}$ on 
$L_2([a,b])$ in the proof of Theorem 5 there. 

\begin{corollary}\label{cor-cont}
Let $\{X_t, t\in \R\}$ be a stationary pairwise Gaussian process on the entire real line 
with autocorrelation $\rho(t)=\E[X_tX_0]$. 
Let $\nu$ be the Lebesgue measure on $\R$, 
$W_{s,t}=W_{t,s}=W(t-s)\ge 0$, $K_W(t)=\rho(t)W(t)$, 
and $K_W: h \to \int_{-\infty}^\infty K_W(\cdot-s)h(s)ds$ as an operator in $L_2(\R)$. 
Suppose $\int_{-\infty}^\infty |K_W(t)| dt<\infty$. Then, 
\eqref{theme} and \eqref{extreme-L-kernel} hold with   
\bes
\rholam^{NL}_{\max}= \lambda_{\max}\left(K_W \right) = \sup_{\omega}\big|K_W^*(\omega)\big|,\quad 
\rholam^{NL}_{\min}= \lambda_{\min}\left(K_W \right) = \inf_{\omega}\big|K_W^*(\omega)\big|, 
\ees
where $K_W^*(\omega) = \int_{-\infty}^\infty K_W(s)\cos(\omega s)ds$. 
In particular, for the Ornstein-Uhlenbeck process with $\rho(t) = e^{-|t|}$, 
$K_1^*(\omega)=2/(1+\omega^2)$, 
$\lambda_{\max}\left(K_1 \right)=2$ and 
$\lambda_{\min}\left(K_1 \right) = 0$ for $W(t)=1$. 
\end{corollary}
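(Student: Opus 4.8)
The plan is to mirror the proof of Corollary~\ref{cor-discrete}, replacing the Fourier series on $L_2([-\pi,\pi])$ by the Fourier transform on $L_2(\R)$. The first step is to invoke Theorem~\ref{th-1} to reduce the nonlinear problem to a linear one: since $X_{\calT}$ is pairwise Gaussian, once Assumptions A and B are verified we have $\rholam^{NL}_{\max}=\rholam^{L}_{\max}=\lambda_{\max}(K_W)$ and $\rholam^{NL}_{\min}=\rholam^{L}_{\min}=\lambda_{\min}(K_W)$, so it only remains to compute the spectrum of the convolution operator $K_W$ on $L_2(\R)$. Here Assumption A(ii) is immediate from stationarity and standardization, A(iii) follows from Young's inequality because $\int_{-\infty}^\infty|K_W(t)|\,dt<\infty$ forces the convolution operator to be bounded with $\|K_W\|\le \int_{-\infty}^\infty|K_W(t)|\,dt$, A(i) holds with the exhaustion $B_n=[-n,n]$, and Assumption B is the standard discrete-type function class.

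Next I would diagonalize $K_W$ by the Fourier transform. Let $F$ denote the unitary Fourier transform on $L_2(\R)$. Because $W_{s,t}=W_{t,s}$ makes $W$ even and $\rho(t)=\rho(-t)$ for a real stationary process, the kernel $K_W(t)=\rho(t)W(t)$ is even and its transform is the real, even function $K_W^*(\omega)=\int_{-\infty}^\infty K_W(s)\cos(\omega s)\,ds$. On a dense class (say Schwartz functions, or $h$ with compactly supported transform, as in the discrete proof) one checks directly that convolution by $K_W$ becomes multiplication by $K_W^*$, i.e. $F K_W F^{-1}=M_{K_W^*}$, where $M_{K_W^*}$ is multiplication by $K_W^*(\omega)$; extending by density and the isometry of $F$ gives this intertwining on all of $L_2(\R)$. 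Consequently $K_W$ is unitarily equivalent to $M_{K_W^*}$, with spectral decomposition $K_W=F^{-1}\big(\int\lambda\,dP_\lambda^*\big)F$ and $P_\lambda^* h(\omega)=h(\omega)I\{K_W^*(\omega)\le\lambda\}$, exactly paralleling Corollary~\ref{cor-discrete}.

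The third step is to read off the extreme eigenvalues as the extremes of $K_W^*$. The spectrum of a multiplication operator $M_g$ is the essential range of $g$; since $\int_{-\infty}^\infty|K_W(t)|\,dt<\infty$, the Riemann--Lebesgue lemma makes $K_W^*$ continuous and vanishing at infinity, so its essential range is $\overline{K_W^*(\R)}$, giving $\lambda_{\max}(K_W)=\sup_\omega K_W^*(\omega)$ and $\lambda_{\min}(K_W)=\inf_\omega K_W^*(\omega)$ (these agree with the stated $\sup_\omega|K_W^*(\omega)|$ and $\inf_\omega|K_W^*(\omega)|$ whenever $K_W^*\ge 0$, as in the example below; note that $0$ always lies in the spectrum because $K_W^*\to0$). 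The Ornstein--Uhlenbeck case is then a direct computation: for $\rho(t)=e^{-|t|}$ and $W\equiv1$, $K_1^*(\omega)=\int_{-\infty}^\infty e^{-|s|}\cos(\omega s)\,ds=2/(1+\omega^2)$, whose supremum $2$ is attained at $\omega=0$ and whose infimum $0$ is approached as $|\omega|\to\infty$, giving $\lambda_{\max}(K_1)=2$ and $\lambda_{\min}(K_1)=0$.

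I expect the main obstacle to be the operator-theoretic diagonalization on the whole line rather than any single computation. Unlike the finite-interval Corollary~\ref{cor-1}, where $K_W$ is Hilbert--Schmidt and the spectrum is a discrete set of eigenvalues, here $K_W$ is a genuine convolution operator with continuous spectrum, so one cannot argue by eigenfunction expansions and must instead use the multiplication-operator form of the spectral theorem. The delicate points are (i) establishing the intertwining $F K_W F^{-1}=M_{K_W^*}$ rigorously by first verifying it on a dense domain and extending by continuity, and (ii) identifying the spectrum with $\overline{K_W^*(\R)}$, which is exactly where the continuity of $K_W^*$ (hence $\int_{-\infty}^\infty|K_W(t)|\,dt<\infty$) is essential. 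Verifying the square-integrability part of Assumption A(i) on the unbounded index set is a secondary technical nuisance but does not affect the core argument.
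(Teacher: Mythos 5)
Your proposal is correct and follows essentially the same route as the paper: the paper's proof likewise verifies the intertwining $K_WF = FK_W^*$ on the dense subspace $L_2(\R)\cap L_1(\R)$, extends by the isometry of the Fourier transform, and reads off the spectrum from the multiplication operator $K_W^*$ exactly as in the discrete Corollary~\ref{cor-discrete}, concluding with the same Ornstein--Uhlenbeck computation $K_1^*(\omega)=1/(1-i\omega)+1/(1+i\omega)=2/(1+\omega^2)$. Your additional care with Assumption A (Young's inequality for boundedness, $B_n=[-n,n]$), the essential-range characterization, and the Riemann--Lebesgue continuity of $K_W^*$ simply makes explicit what the paper leaves implicit.
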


\begin{proof} 
Here $F$ is the Fourier transformation   
$(Fh)(t) = (2\pi)^{-1/2}\int_{-\infty}^\infty e^{i\omega t} h(\omega)d\omega$, and 
\bes
(K_WFh)(t) 
= \int_{-\infty}^\infty K_W(|t-s|)\int_{-\infty}^\infty \frac{e^{i\omega s}}{(2\pi)^{1/2}}h(\omega)d\omega ds 
= \int_{-\infty}^\infty \frac{e^{i\omega t}}{(2\pi)^{1/2}}K_W^*(\omega)h(\omega)d\omega. 
\ees
This certainly holds for $h\in L_2(\R)\cap L_1(\R)$ which is dense in $L_2(\R)$. 
Again, as $F$ is isometric in $L_2(\R)$, $K_WF = FK^*_W$ with the operator 
$K_W^*: h(\omega)\to K_W^*(\omega) h(\omega)$. 
This gives the spectrum decomposition and spectrum limits of $K_W$ 
as in the proof of Corollary \ref{cor-discrete}. 
For $K_1(t)=e^{-|t|}$, 
$K_1^*(\omega)=\int e^{-|s|}e^{i\omega s}ds =1/(1-i\omega)+1/(1+i\omega)$ 
gives the spectrum. 
\end{proof} 
}

Our problem is also related to mixing conditions on stochastic processes. 
For example, when $\{X_t, -\infty < t <\infty\}$ is a Gauss-Markov process, 
its $\rho$-mixing coefficient, given by 
\bes
{\rho^*(n)} = \sup_t\sup_{f\in \scrF_{(-\infty,t]},g\in \scrF_{[t+n,\infty)}}\Corr(f,g)
\ees
with $\scrF_A$ being the set of all nonzero square integrable functions of $\{X_t, t\in A\}$, 
can be characterized by 
$\rho^*(n) = \sup_t \Corr(X_t,X_{t+n})$ by \eqref{eq: known}. 
However, as this paper is mainly motivated by the application in the additive model 
as discussed in Section \ref{sec: comp} and the multivariate extension of Dembo, Kagan and Shepp (2001) 
as discussed in Section \ref{sec: symiid}, our results do not yield a direct extension of the above explicit 
calculation of the $\rho$-mixing condition to more general processes. 
We refer to \cite{bradley2005basic} 
for a survey of the relationship among different mixing conditions.

 \section{Applications to Additive Models}
 \label{sec: comp}

In this section, we discuss applications of 
{our results to} 
additive models, 
including justification of theoretical restricted eigenvalue and compatibility conditions 
and derivation of convergence rates for the estimation of individual component 
functions from prediction error bounds. 
In the additive regression model, the relationship between the response variable $Y$ 
and design {vector $X_{1:p}=(X_1,\ldots,X_p)$} is given by 
$Y = \sum_{j=1}^{p} f_{j}(X_j)+\varepsilon$, 
{or $\E[Y|X_{1:p}] =\sum_{j=1}^{p} f_{j}(X_j)$ in terms of the conditional expectation,    
where $f_j$ are assumed to be smooth functions and} $\varepsilon$ is the noise variable 
independent of {$X_{1:p}$ with $\E[\varepsilon]=0$.}

Additive models have been important tools for practical data analysis \citep{friedman1981projection,buja1989linear}, mainly due to the fact that it relaxes the stringent 
{model assumption in linear regression} and at the same time {mitigates} 
the curse of dimensionality 
{in multiple nonparametric regression with $Y = f(X_{1:p})+\varepsilon$.} 
Another advantage of {the} additive model is {the natural interpretation of its components. 
For example,} the rate of change of the $j$-th function $f_j$ 
represents the effect of the covariate $X_{j}$ {as in linear regression.} 

Due to its importance, 
{additive models have been extensively} investigated in both the classical low-dimension setting and {the more contemporary high-dimensional setting where only a much smaller number 
than $p$ of the components $f_j$ are actually nonzero. In both cases, one}  
of the main assumptions is the invertibility condition of the additive model. In the very special linear regression setting $Y = \sum_{j=1}^{p} X_j\beta_j+\varepsilon$, this invertibility condition is 
{that} the minimum eigenvalue of the {sample} covariance matrix of {the design 
vector $X_{1:p}$} is bounded away from zero. In the more general additive model, {the component functions $f_j$ are} not necessarily linear and the invertibility condition is 
{typically imposed on the sample covariance 
matrix of certain basis functions of $f_j$.} 
In the following, we connect the invertibility condition to $\rho^{NL}_{\min}$ 
{in \eqref{L-corr-min-stoch}}  
and verify them over a large class of distributions, in both the {low- and high-dimensional settings.}

\subsection{Implications to low-dimensional additive models}
{We start with the low-dimensional setting where the number of covariates 
$p$ is fixed or much smaller than the diverging sample size $n$. 
A useful way of understanding and implementing the additive models 
is to consider the projection of the response to the linear span of suitable bases of the 
component functions $f_j$. 
Denote a set of basis functions for $f_j$ by 
$B_j=B_{j}(x_j)=(B_{j,1} (x_{j})\cdots B_{j, M_{j}} (x_{j}))^{\intercal}\in \R^{M_j}$ with some positive integer $M_{j}$, where the basis can be taken as Fourier, spline, wavelet or other 
constructions and $M_j$ is allowed to grow as the sample size increases. Under proper smoothness conditions, $f_j(x_j)$ can be adequately approximated by 
a linear combination $a_j^{\intercal}B_j(x_j)$ of its basis functions, 
resulting in a $d^*$-dimensional 
regression $E[Y|X_{1:p}]\approx \sum_{j=1}^pa_j^{\intercal}B_j(X_j)$ 
with very large $d^* = \sum_{j=1}^p M_j$. 

When iid copies {$\{(X_{i,1},\ldots,X_{i,p},Y_i)\}_{1\leq i\leq n}$} of $(X_1,\ldots,X_p,Y)$ are observed, 
the invertibility condition in this $d^*$-dimensional regression can be written as 
\begin{equation}
\P\left\{\min_{\sum_{j=1}^p \|\alpha_j\|_2^2 = 1} 
\frac{1}{n}\sum_{i=1}^n\left(\sum_{j=1}^p a_j^{\intercal}B_j(X_{i,j}) 
- \left(\frac{1}{n}\sum_{i=1}^n\sum_{j=1}^p a_j^{\intercal}B_j(X_{i,j})\right)\right)^2 \geq \kappa_0\right\}\to 1 
\label{eq: sample invert}
\end{equation}
with some fixed positive constant $\kappa_0$. 
While probabilistic methods such as empirical process and noncommutative Bernstein inequality 
can be used to verify the above condition, 
such analysis invariably requires the following population invertibility condition: 
$$
\min_{\sum_{j=1}^p \|\alpha_j\|_2^2 = 1} 
\left\|\sum_{j=1}^{p} \alpha^{\intercal}_j B_{j}(X_{j})\right\|_{L_2^{(0)}(\P)}^2 \geq \kappa_0, 
$$
as $\|f(X_{1:p})\|_{L_2^{(0)}(\P)}^2 = \Var(f(X_{1:p}))$ for all functions $f:\R^p\to\R$. 
This population invertibility condition can be decomposed into a 
component-wise invertibility condition 
$$
\left\|\alpha^{\intercal}_j B_{j}(X_{j})\right\|_{L_2^{(0)}(\P)}^2 \geq \kappa_1\|\alpha_j\|_2^2,
 \; \text{for}\; j=1,\ldots, p \; \text{and}\; \kappa_1>0, 
$$
and a population predictive invertibility condition  
\begin{equation}
\left\|\sum_{j=1}^{p} \alpha^{\intercal}_j B_{j}(X_{j})\right\|_{L_2^{(0)}(\P)}^2
\ge \kappa_2\sum_{j=1}^{p}\left\|\alpha^{\intercal}_j B_{j}(X_{j})\right\|_{L_2^{(0)}(\P)}^2 \; \text{for}\; 
\kappa_2>0. 
\label{eq: requirement low}
\end{equation}

Assume that after proper centering and scaling the support of $X_{1:p}$ is $[0,1]^p$. 
The component-wise invertibility condition is fulfilled when 
$B_j$ is orthonormal in $L_2([0,1])$ and the marginal density of $X_j$ is uniformly 
greater than $\kappa_1$ in $[0,1]$. 
The orthonormal condition on $B_j$ can be further weakened to 
$\big\|\alpha^{\intercal}_j B_{j}(X_{j})\big\|_{L_2^{(0)}([0,1])}^2 \gtrsim \|\alpha_j\|_2^2$ 
as in the case of $B$-spline. 
It is also well known that the population predictive invertibility condition 
\eqref{eq: requirement low} holds when the joint density of 
$X_{1:p}$ is uniformly greater than $\kappa_0$ in $[0,1]^p$. 
However, while the lower bound assumption on the individual marginal densities 
approximately holds after the quantile transformation of individual samples 
$(X_{1,j},\ldots,X_{n,j})$, 
the lower bound assumption on the joint density is much harder to ascertain. 
Consequently, it is unclear from the existing literature the extent of the validity of the largely 
theoretical assumption \eqref{eq: requirement low} beyond the restrictive condition on the 
lower bound of the joint density. 

Our results provide the validity of \eqref{eq: requirement low} in a broad collection of new scenarios as follows. 
When $X_{1:p} = \left(T_1(Z_{1}),\cdots,T_p(Z_{p})\right)$ follows a hidden pairwise Gaussian distribution, 
Corollary \ref{cor: nonlinear correlation hidden} {provides} 
\eqref{eq: requirement low} with $\kappa_2=\lambda_{\min}(\Sigma^z)$, 
Thus, the population predictive invertibility condition 
is satisfied 
as long as the covariance matrix $\Sigma^z$ is well conditioned.}

%
%

\subsection{Implications to high-dimensional additive models}

The prediction performance of additive models has also been carefully investigated in the 
high-dimensional setting through regularized estimation. 
{In the high-dimensional setting where $d^*>n$, e.g. $p>n$, 
the sample invertibility condition \eqref{eq: sample invert} would not hold for any $\kappa_0>0$ 
as the rank of the matrix $((B_j^{\intercal}(X_{i,j}),j\le p)^{\intercal}, i\le n)$ cannot be greater than $n$. 
A popular remedy to this impasse is to impose the sparsity condition that 
only a small unknown subset of components $f_1,\cdots, f_{p}$ are actually non-zero. 
This is referred to as the sparse additive model and has the natural interpretation that 
the response $Y$ depends on the design variables only through a small number of them. 
We use $s$, the number of non-zero $f_j$, 
and  the smoothness index of the nonzero $f_j$ to measure the complexity of the 
sparse additive model. } 
A core assumption {in} the theory {of penalized estimation in the} sparse additive model 
is the restricted eigenvalue and compatibility conditions.  
Let $\calI = \{j: f_j \neq 0\}$ be the unknown index set of real signals 
and $\kappa_0$ and $\xi_0$ be positive constants, the 
theoretical restricted eigenvalue and compatibility conditions can be defined as 
{\begin{equation}
\phi^{*}=\inf\left\{\frac{|\calI|^{2-q}\left\|\sum_{j=1}^{p}f_{j}(X_{j})\right\|^2_{L_2^{(0)}(P)}}
{\Big(\sum_{j\in \mathcal{J}}\big\|f_{j}(X_{j})\big\|_{L_2^{(0)}(P)}^q\Big)^{2/q}}: 
\frac{\sum_{j\in \mathcal{I}}\pen_j(f_j)}
{\sum_{j\in \mathcal{I}^{c}}\pen_j(f_j)}>\xi_0
\right\} \ge \kappa_0 
\label{eq: theory comp}
\end{equation}
with the convention $0/0=0$, where $q=2$ and $\calJ = \{1,\ldots,p\}$ for 
the restricted eigenvalue condition, $q=1$ and $\calJ = \calI$ for the compatibility coefficient, 
and $\pen_j(f_j)$, typically a certain norm of $f_j$ as a regularizer, is the penalty function. 

In high-dimensional linear regression, the sample restricted eigenvalue and compatibility 
conditions were respectively proposed in \citet{bickel2009simultaneous} and \citet{van2009conditions}. 
Condition \eqref{eq: theory comp}, which generalizes the population predictive invertibility condition 
\eqref{eq: requirement low} imposed in the low-dimensional setting, 
is comparable to the key invertibility conditions imposed in 
\cite{koltchinskii2010sparsity} and \cite{suzuki2013fast} for $q=2$ and 
\cite{meier2009high} and \cite{tan2017penalized} for $q=1$. } 

To make the dependence on the compatibility condition more explicit, {Theorem 1 of \cite{meier2009high} establishes that, in} the case where all the unknown functions are twice differentiable, the rate of convergence in terms of {the} in-sample prediction accuracy is 
$s (\log p/n)^{4/5}/\phi_n$, 
{where} $\phi_n$ is a sample version of $\phi^*$ defined in \eqref{eq: theory comp}. 
{Moreover, Theorem~2 of \cite{meier2009high} and its proof show that} 
as long as the theoretical compatibility condition \eqref{eq: theory comp} {holds,} 
$\phi_n$ and $\phi^{*}$ are of the same order and {the rate of the population prediction error 
is $s (\log p/n)^{4/5}/{\phi^{*}}$.}   
This directly illustrates the role of $\phi^{*}$ defined in the theoretical compatibility condition \eqref{eq: theory comp} on the rate of convergence. 


Despite the importance of \eqref{eq: theory comp} in theoretical justification of 
{regularized prediction in sparse} 
additive models, it has been typically imposed as a condition but without 
{further verification of} its validity  
other than {in some very special cases such as} 
the class of densities {of $X_{1:p}$} on $[0,1]^p$ uniformly bounded away 
from 0 and $\infty$. The result of the current paper on the minimum eigenvalue of the nonlinear correlation matrix sheds light on the {theoretical restricted eigenvalue and 
compatibility conditions} for additive models 
in the sense that {condition \eqref{eq: theory comp}} is satisfied with $\kappa_0$ being the 
minimum eigenvalue of the correlation matrix of {the latent pairwise Gaussian vector $Z_{1:p}$ 
as in Corollary \ref{cor: nonlinear correlation hidden}.}

\begin{corollary}
Suppose $\left(X_{1},X_{2},\cdots, X_{p}\right)$ follows a hidden Gaussian distribution 
with $X_j =T_j(Z_j)$ for a pairwise Gaussian 
vector $(Z_1,\ldots,Z_p)$ with $\Corr(Z_1,\ldots,Z_p) = \Sigma^z$ and 
some deterministic functions $T_j$ with $0<\Var(T_j(Z_j))<\infty$. 
Then, {condition} \eqref{eq: theory comp} holds with 
$\kappa_0=\lambda_{\min}(\Sigma^{z})$. 
In particular, {the theoretical restricted {eigenvalue} and compatibility conditions hold when 
$\lambda_{\min}(\Sigma^{z})$ is strictly bounded away from zero.} 
\label{cor: comp}
\end{corollary}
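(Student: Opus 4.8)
The plan is to reduce \eqref{eq: theory comp} to the single population inequality supplied by Corollary~\ref{cor: nonlinear correlation hidden}. Since $(X_1,\ldots,X_p)$ is hidden pairwise Gaussian with latent correlation matrix $\Sigma^z$, that corollary gives \eqref{eq: bound Gaussian} with $\Sigma$ replaced by $\Sigma^z$. Because the semi-norm $\|\cdot\|_{L_2^{(0)}(\P)}$ equals the standard deviation and is therefore unchanged when each $f_j$ is centered so that $\E[f_j(X_j)]=0$ (centering only shifts $\sum_j f_j$ by a constant, which does not affect its variance), the lower half of \eqref{eq: bound Gaussian} reads
\[
\Big\|\sum_{j=1}^p f_j(X_j)\Big\|_{L_2^{(0)}(\P)}^2 \;\ge\; \lambda_{\min}(\Sigma^z)\sum_{j=1}^p \big\|f_j(X_j)\big\|_{L_2^{(0)}(\P)}^2
\]
for every admissible choice of $f_j$. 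This is exactly the population predictive invertibility condition \eqref{eq: requirement low} with $\kappa_2=\lambda_{\min}(\Sigma^z)$, and it is the only probabilistic input; the rest is a deterministic comparison of the numerator and denominator of \eqref{eq: theory comp}.

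For the restricted eigenvalue condition ($q=2$, $\calJ=\{1,\ldots,p\}$) the weight $|\calI|^{2-q}=1$ and the denominator is $\big(\sum_{j}\|f_j(X_j)\|_{L_2^{(0)}(\P)}^2\big)^{2/q}=\sum_{j}\|f_j(X_j)\|_{L_2^{(0)}(\P)}^2$, so the quotient coincides with the ratio controlled above and is bounded below by $\lambda_{\min}(\Sigma^z)$ directly. For the compatibility condition ($q=1$, $\calJ=\calI$) I would first apply Cauchy--Schwarz to the denominator,
\[
\Big(\sum_{j\in\calI}\|f_j(X_j)\|_{L_2^{(0)}(\P)}\Big)^2 \;\le\; |\calI|\sum_{j\in\calI}\|f_j(X_j)\|_{L_2^{(0)}(\P)}^2 \;\le\; |\calI|\sum_{j=1}^p\|f_j(X_j)\|_{L_2^{(0)}(\P)}^2,
\]
and then bound the numerator $|\calI|\,\|\sum_{j}f_j(X_j)\|_{L_2^{(0)}(\P)}^2$ from below using the displayed invertibility bound. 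The two factors of $|\calI|$ cancel and the quotient is again at least $\lambda_{\min}(\Sigma^z)$; since this holds for all admissible $(f_j)$, the infimum $\phi^*$ is at least $\lambda_{\min}(\Sigma^z)$, which is the asserted $\kappa_0$. This cancellation is precisely why the normalizing power $|\calI|^{2-q}$ is built into \eqref{eq: theory comp}.

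Two pieces of bookkeeping finish the argument. The cone constraint $\sum_{j\in\calI}\pen_j(f_j)/\sum_{j\in\calI^c}\pen_j(f_j)>\xi_0$ is irrelevant to the lower bound, since both estimates above hold for all admissible $(f_j)$ and restricting the infimum to the cone can only raise it; and the degenerate case in which the denominator vanishes is covered by the convention $0/0=0$, while whenever the denominator is positive the invertibility bound forces the numerator to be positive (as $\lambda_{\min}(\Sigma^z)>0$ under the full-rank hidden-Gaussian hypothesis). The one point that genuinely needs care is the combinatorial matching between a numerator summed over all $p$ components and a compatibility denominator summed only over $\calI$: I must verify that the factor $|\calI|$ lost to Cauchy--Schwarz is exactly offset by the weight $|\calI|^{2-q}$, so that the resulting constant is the dimension-free $\lambda_{\min}(\Sigma^z)$ rather than some $|\calI|$-dependent quantity. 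The passage from jointly Gaussian to merely pairwise Gaussian $Z_{1:p}$ needs no new work, being already encoded in Corollary~\ref{cor: nonlinear correlation hidden}.
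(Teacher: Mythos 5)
Your proposal is correct and follows exactly the route the paper intends: Corollary~\ref{cor: nonlinear correlation hidden} supplies the population invertibility bound $\|\sum_j f_j(X_j)\|_{L_2^{(0)}(\P)}^2 \ge \lambda_{\min}(\Sigma^z)\sum_j\|f_j(X_j)\|_{L_2^{(0)}(\P)}^2$, which gives the $q=2$ case of \eqref{eq: theory comp} immediately, and the $q=1$ case after the Cauchy--Schwarz step in which the factor $|\calI|$ cancels against the weight $|\calI|^{2-q}$. The paper states the corollary without a detailed argument, and your write-up (including the observations that centering is harmless for the variance semi-norm and that the cone constraint only raises the infimum) fills in precisely the intended bookkeeping.
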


The above corollary implies that {condition \eqref{eq: theory comp}} holds for the Gaussian copula {model.} To the best of the authors' knowledge, this is a new connection of the theoretical restricted eigenvalue and compatibility conditions to {the widely-used model of multivariate dependency.} 

In addition to verifying the important condition \eqref{eq: theory comp}, 
{our results also provide the following connection between} 
the rate of convergence {in the estimation of} the individual components $f_j$ 
{and the prediction rate} 
\citep{meier2009high,koltchinskii2010sparsity,raskutti2012minimax, 
suzuki2013fast,tan2017penalized}. 

\begin{corollary}
Under the same assumption as Corollary \ref{cor: comp}, 
\begin{equation*}
{\lambda_{\min}(\Sigma^{z})\sum_{i=1}^{p} \|\fhat_i-f_i\|_{L_2^{(0)}(\P)}^2 
\leq \bigg\| \sum_{i=1}^{p} \fhat_i - \sum_{i=1}^{p} f_i \bigg\|_{L_2^{(0)}(\P)}^2
\le \lambda_{\max}(\Sigma^{z})\sum_{i=1}^{p} \|\fhat_i-f_i\|_{L_2^{(0)}(\P)}^2.}
\end{equation*}
\label{cor: covergence}
\end{corollary}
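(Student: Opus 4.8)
The plan is to recognize this as a direct consequence of the two-sided eigenvalue bound \eqref{eq: bound Gaussian}, extended to the hidden pairwise Gaussian setting by Corollary \ref{cor: nonlinear correlation hidden}, applied to the difference functions rather than to the component functions themselves. First I would set $g_i = \fhat_i - f_i$ for each $i$, so that the quantity to be bounded is $\big\|\sum_{i=1}^p g_i\big\|_{L_2^{(0)}(\P)}^2 = \Var\big(\sum_{i=1}^p g_i(X_i)\big)$ and the target summands are $\|g_i\|_{L_2^{(0)}(\P)}^2 = \Var\big(g_i(X_i)\big)$. Since the semi-norm $\|\cdot\|_{L_2^{(0)}(\P)}$ depends only on the variance, I may replace each $g_i$ by its centered version $g_i - \E[g_i(X_i)]$ without altering either side of the asserted inequality, so that the centering requirement $\E[g_i(X_i)]=0$ needed to invoke Corollary \ref{cor: nonlinear correlation hidden} is met.

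Next, because $X_i = T_i(Z_i)$ with $Z_{1:p}$ pairwise Gaussian and correlation matrix $\Sigma^z$, each centered $g_i(X_i) = (g_i\circ T_i)(Z_i)$ is a centered square-integrable function of the single Gaussian coordinate $Z_i$. This places us exactly in the situation of Corollary \ref{cor: nonlinear correlation hidden}, whose conclusion is that \eqref{eq: bound Gaussian} holds with $\Sigma$ replaced by $\Sigma^z$. Applying that sandwich inequality to the functions $g_1,\ldots,g_p$ yields
\[
\lambda_{\min}(\Sigma^z)\sum_{i=1}^p \Var\big(g_i(X_i)\big)
\le \Var\Big(\sum_{i=1}^p g_i(X_i)\Big)
\le \lambda_{\max}(\Sigma^z)\sum_{i=1}^p \Var\big(g_i(X_i)\big),
\]
and translating $\Var\big(g_i(X_i)\big) = \|\fhat_i - f_i\|_{L_2^{(0)}(\P)}^2$ together with $\Var\big(\sum_i g_i(X_i)\big) = \big\|\sum_i \fhat_i - \sum_i f_i\big\|_{L_2^{(0)}(\P)}^2$ gives precisely the claimed two-sided bound.

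The only genuine subtlety, and the step I would treat most carefully, is the degenerate case in which some of the differences $g_i$ are almost surely constant, since Corollary \ref{cor: nonlinear correlation hidden} (via \eqref{eq: bound Gaussian}) presupposes $0 < \E g_i^2(X_i) < \infty$. An almost surely constant $g_i$ contributes zero to $\sum_i \Var(g_i(X_i))$ and leaves $\Var\big(\sum_i g_i(X_i)\big)$ unchanged, so I would simply discard such indices and apply the corollary to the surviving coordinates, whose latent variables carry a principal submatrix $\Sigma^z_{\calS}$ of $\Sigma^z$. By Cauchy interlacing, $\lambda_{\min}(\Sigma^z) \le \lambda_{\min}(\Sigma^z_{\calS})$ and $\lambda_{\max}(\Sigma^z_{\calS}) \le \lambda_{\max}(\Sigma^z)$, so the bounds obtained on the submatrix relax in the correct directions to the full-matrix eigenvalues, and the inequality holds in all cases (trivially when every $g_i$ is constant). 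No further probabilistic or analytic machinery is needed: the entire content is the prior corollary together with the variance-to-semi-norm dictionary and this interlacing observation.
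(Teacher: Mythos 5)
Your proposal is correct and follows exactly the route the paper intends: the corollary is stated as an immediate consequence of Corollary \ref{cor: nonlinear correlation hidden} (i.e.\ \eqref{eq: bound Gaussian} with $\Sigma$ replaced by $\Sigma^z$) applied to the centered differences $g_i=\fhat_i-f_i$, which is precisely your argument. Your extra care with the degenerate case of almost surely constant $g_i$, handled by discarding those indices and invoking Cauchy interlacing for the principal submatrix, is a correct refinement of a point the paper leaves implicit.
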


\section{
{Symmetric} Functions of iid Random Variables}
\label{sec: symiid}
In this section, we move beyond the pairwise Gaussianality and consider the extreme nonlinear correlation for symmetric functions of iid random variables. We first consider multiple  {nested} sums of iid random variables {to directly generalize} 
the results for a pair of {nested} sums established in Dembo, Kagan and Shepp (2001)
{and \cite{bryc2005maximum}.}  
In Section \ref{sec: symmetric}, we consider {the} class of symmetric functions defined on 
groups of iid random variables
and establish the extreme nonlinear correlation {in the much broader setting}. 

\subsection{{Nested} sums}
\label{sec: partial sum}
In this section, we consider the extreme nonlinear correlation for multiple {nested} sums of iid random variables. Specifically, given positive integers $m_1< m_2< \cdots <m_{p}$ 
and iid non-degenerate random variables $Y_1,Y_2,\ldots$, we consider 
\begin{equation}
X_j = S_{m_j} = \sum_{i=1}^{m_j} Y_i\quad \text{for}\;\; j=1,\ldots, p.
\label{eq: partial sum}
\end{equation}
Here, {the non-degeneracy} 
means that the distribution of the random variable is not concentrated at a {single} point. 
In the case of $p=2$, Dembo, Kagan and Shepp (2001) 
proved that the maximum correlation of $S_{m_1}$ and $S_{m_2}$ is equal to 
{$\sqrt{m_1/m_2}$} if $Y$ has finite second moment, 
and \cite{bryc2005maximum} proved the same result even 
without assuming the finite second order moment by investigating the characteristic functions 
of sums of $Y_i$. The following theorem extends their results from $p=2$ to general finite $p$. 
Further extensions to general symmetric functions of arbitrary groups of $Y_i$ are 
given in the next subsection. 

\begin{theorem}
Let $Y, Y_1, Y_2, \ldots$ be iid {non-degenerate random variables} 
and {$X_{1},X_{2},\cdots, X_{p}$} be nested sums of $Y_i$ 
{with sample sizes $1\le m_1\le\cdots\le m_p$} as defined 
in \eqref{eq: partial sum}. Then, 
\bel{eq: result 2} 
{\rho_{\max}^{NL}(X_{1:p};\nu,W) = \lambda_{\max}(R\circ W), \quad 
\rho_{\min}^{NL}(X_{1:p};\nu,W) = \lambda_{\min}(R\circ W),}
\eel
where $R = (R_{j,k})_{p\times p}$ is the matrix with elements 
{$R_{jk}=(m_j\wedge m_k)/\sqrt{m_j m_k}$,  
$\nu$ is taken as the counting measure for the extreme nonlinear correlations defined in   
 \eqref{NL-corr-max-stoch} and \eqref{NL-corr-min-stoch}, and $\circ$ denotes the Schur product.} 
If $Y$ has a finite second moment, then 
$R\in \R^{p\times p}$ is the correlation matrix of the {nested} sums 
$X_j=S_{m_j}$, $1\le j\le p$, so that \eqref{theme} holds with 
{$X_{1:p}$ for all measures $\nu$ 
and weights $W_{j,k}=W_{k,j}\ge 0$,}
$$
{\rho_{\max}^{NL}(X_{1:p};\nu,W) =\rho_{\max}^{L}(X_{1:p};\nu,W),\quad  
\rho_{\min}^{NL}(X_{1:p};\nu,W) =\rho_{\min}^{L}(X_{1:p};\nu,W).}
$$
\label{thm: partial sum}
\end{theorem}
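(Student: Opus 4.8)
The plan is to trap each extreme nonlinear correlation between the corresponding eigenvalue of $R\circ W$ from both sides: the bounds $\rho^{NL}_{\max}(X_{1:p})\le\lambda_{\max}(R\circ W)$ and $\rho^{NL}_{\min}(X_{1:p})\ge\lambda_{\min}(R\circ W)$ will follow from the symmetric-function identities \eqref{new-max}--\eqref{new-min}, and the reverse inequalities from an explicit characteristic-function construction followed by a complex-to-real reduction. Since $\calT=\{1,\dots,p\}$ is finite, I first note that a general measure $\nu$ may be traded for the counting measure at the cost of modifying $W$, by the Radon--Nikodym reduction described below \eqref{extreme-NL-kernel}; it therefore suffices to establish \eqref{eq: result 2} for the counting measure and an arbitrary symmetric weight $W_{jk}=W_{kj}\ge0$.

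For the first pair of inequalities, set $G_j=\{1,\dots,m_j\}$ and $\bX_j=(Y_i,i\in G_j)$. Every admissible test function $f_j(X_j)=f_j(S_{m_j})$ is a symmetric function of $\bX_j$, so by Lemma~\ref{lm-1} the sup and inf defining $\rho^{NL}_{\max}(X_{1:p})$ and $\rho^{NL}_{\min}(X_{1:p})$ range over a subclass of the symmetric functions appearing in \eqref{new-max}--\eqref{new-min}. Because $S_{G_j}=\sum_{i\in G_j}h_0(Y_i)$ satisfies $\Corr(S_{G_j},S_{G_k})=|G_j\cap G_k|/\sqrt{|G_j|\,|G_k|}=(m_j\wedge m_k)/\sqrt{m_jm_k}=R_{jk}$ for any $h_0$ with $0<\Var(h_0(Y))<\infty$, the right-hand sides of \eqref{new-max} and \eqref{new-min} are exactly $\lambda_{\max}(R\circ W)$ and $\lambda_{\min}(R\circ W)$; since $\cap_j G_j=G_1\neq\emptyset$, both identities are available. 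Comparing a supremum (resp.\ infimum) over a subclass with the one over the full symmetric class gives $\rho^{NL}_{\max}(X_{1:p})\le\lambda_{\max}(R\circ W)$ and $\rho^{NL}_{\min}(X_{1:p})\ge\lambda_{\min}(R\circ W)$.

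The reverse inequalities are the crux, since the extremizing symmetric functions $\sum_i h_0(Y_i)$ are not functions of the single sum $S_{m_j}$ when $h_0$ is nonlinear, and without a finite second moment the linear choice $f_j(x)=x$ is unavailable. I will instead use the complex test functions $\tilde g_j=e^{i\theta S_{m_j}}-\phi(\theta)^{m_j}$, where $\phi(\theta)=\E e^{i\theta Y}$, and send $\theta\to0$. The independent-increment structure of the nested sums gives, for $m_j\le m_k$, $\E[\tilde g_j\overline{\tilde g_k}]=\overline{\phi(\theta)}^{\,m_k-m_j}\big(1-|\phi(\theta)|^{2m_j}\big)$ and $\E|\tilde g_j|^2=1-|\phi(\theta)|^{2m_j}$, so the Hermitian correlation matrix $C^{(\theta)}$ of $(\tilde g_1,\dots,\tilde g_p)$ has entries $C^{(\theta)}_{jk}=\overline{\phi(\theta)}^{\,m_k-m_j}\sqrt{(1-|\phi(\theta)|^{2m_j})/(1-|\phi(\theta)|^{2m_k})}$. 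Non-degeneracy of $Y$ forces $|\phi(\theta)|<1$, hence $\E|\tilde g_j|^2>0$, for all small $\theta\neq0$; and since $1-q^{m}\sim m(1-q)$ as $q=|\phi(\theta)|^2\uparrow1$ while $\phi(\theta)\to1$, one obtains the entrywise limit $C^{(\theta)}_{jk}\to R_{jk}$. To transfer this to the real-valued definitions \eqref{NL-corr-max}--\eqref{NL-corr-min}, I use that for any centered complex family $g_j=f_j+ih_j$ the symmetry of $W$ cancels the imaginary cross terms, so that
\[
\frac{\Re\sum_{j,k}W_{jk}\,\E[g_j\overline{g_k}]}{\sum_j\E|g_j|^2}
=\frac{\sum_{j,k}W_{jk}\,\E[f_jf_k]+\sum_{j,k}W_{jk}\,\E[h_jh_k]}{\sum_j\E f_j^2+\sum_j\E h_j^2}
\]
is a weighted mediant of two real Rayleigh quotients and hence lies in $[\rho^{NL}_{\min},\rho^{NL}_{\max}]$. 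Taking $g_j=v_j\,\tilde g_j/\sqrt{\E|\tilde g_j|^2}$ for a unit eigenvector $v$ of $R\circ W$, the left-hand side equals $v^{*}(C^{(\theta)}\circ W)v\to v^{\intercal}(R\circ W)v$; the top eigenvector yields $\rho^{NL}_{\max}(X_{1:p})\ge\lambda_{\max}(R\circ W)$ and the bottom eigenvector yields $\rho^{NL}_{\min}(X_{1:p})\le\lambda_{\min}(R\circ W)$, which proves \eqref{eq: result 2}. When $\E Y^2<\infty$ I take $h_0(y)=y$, so that $S_{G_j}=S_{m_j}=X_j$ and $R$ is literally the correlation matrix of $X_{1:p}$; then $\rho^{L}_{\max}(X_{1:p};\nu,W)=\lambda_{\max}(R\circ W)$ and $\rho^{L}_{\min}(X_{1:p};\nu,W)=\lambda_{\min}(R\circ W)$, and \eqref{eq: result 2} becomes \eqref{theme}, the extension to arbitrary $\nu$ again being supplied by the Radon--Nikodym reduction.

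I expect the main obstacle to be precisely this reverse direction: manufacturing, with no moment assumption, honest real test functions of the \emph{single} variable $S_{m_j}$ whose joint correlation matrix approximates the whole of $R$. The characteristic-function family handles this for two reasons. A single common frequency $\theta$ couples all $p$ nested sums at once, so the entire matrix $R$ is recovered in the limit, not merely its pairwise entries as in the $p=2$ case. Moreover, the normalization $1-|\phi(\theta)|^{2m}\sim m\,(1-|\phi(\theta)|^2)$ reproduces the factor $\sqrt{m_j/m_k}$ from the overlap sizes alone, independently of the tail of $Y$. The one point demanding care is the strict positivity $\E|\tilde g_j|^2>0$ for the small $\theta$ used, which is exactly where non-degeneracy of $Y$ is needed.
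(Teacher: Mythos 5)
Your proposal is correct, and its first half is exactly the paper's: the authors likewise view $f_j(X_j)=f_j(S_{m_j})$ as symmetric functions of the nested groups $G_j=\{1,\ldots,m_j\}$, note that Assumption C holds since $\cap_{j}G_j=G_1\neq\emptyset$, and invoke Theorem \ref{thm: symmetric} to get $\rho^{NL}_{\max}\le\lambda_{\max}(R\circ W)$ and $\rho^{NL}_{\min}\ge\lambda_{\min}(R\circ W)$. Where you genuinely diverge is the attainability half when $\E[Y^2]=\infty$. The paper stays entirely with real test functions: it takes $f_j=\sin(tX_j-m_jc_t)$ with the phase $c_t$ chosen so that $\E[\sin(tY-c_t)]=0$, expands $\sin(S'_{a:m})$ into telescoping products $f_{a,m,u}$, and controls cross terms via the estimate $\big|\E[\sin(Y')\cos(Y')]\big|\le\E[\sin^2(Y')]+\sqrt{\E[\sin^2(Y')]\P\{|Y|>1/t\}}$, thereby proving entrywise convergence of an honest real correlation matrix to $R$; the more general statement \eqref{pf-th-2-2} for windows $S'_{a:m}$ is then reused in Corollary \ref{cor-6}. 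You instead keep the centered complex exponentials $\tilde g_j=e^{i\theta S_{m_j}}-\phi(\theta)^{m_j}$, for which the independent-increment structure yields the covariances in closed form with no remainder estimates at all (your formulas check out, and your choice of small $\theta$ with $|\phi(\theta)|<1$ plays precisely the role of the paper's choice of $t$ with $\P\{\sin(t(Y_1-Y_2))=0\}<1$), and you pass back to the real class via the mediant observation that the real part of the weighted Hermitian form is a convex-type combination of the Rayleigh quotients of $(\Re g_j)$ and $(\Im g_j)$, hence trapped in $[\rho^{NL}_{\min},\rho^{NL}_{\max}]$. That reduction is a genuinely different and arguably cleaner device: it buys exact algebra in place of the paper's trigonometric estimates, at the cost of delivering only the limiting quadratic-form values $v^{\intercal}(R\circ W)v$ rather than a real family whose full correlation matrix tends to $R$ (which is the form the paper later exploits for arbitrary groups). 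One small point to patch: if the eigenvector $v$ has zero coordinates, or if one of the families $(\Re g_j)$, $(\Im g_j)$ vanishes identically, the corresponding functions violate the coordinatewise requirement $\E[f_j^2(X_j)]>0$ in Lemma \ref{lm-1}'s admissible class; this is harmless --- perturb zero components by $\epsilon$ and let $\epsilon\to0$, and note the degenerate mediant collapses to the surviving quotient --- but as written that step is not literally licensed.
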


{As discussed below Corollary \ref{cor-1}, for $m_j=j$ and large $p$, 
$\lambda_{\max}(R\circ W)/p$ with weight matrix $W_{j,k}$ 
is approximately the maximum eigenvalue of 
the operator $K_{W'}$ in $L_2([0,1])$ when $W_{j,k}=W'_{j/p,k/p}$ for a 
$W'_{s,t}$ continuous in $(s,t)\in [0,1]^2$.}  




{
\begin{proof}
As $f_j(X_j) = f_j(S_{m_j})$, $m_1\le\cdots\le m_p$, 
are symmetric functions of nested variable groups 
$\{Y_i, i\in G_j\}$ with $G_j=\{1,2,\cdots,m_j\}$ and $\cap_{j=1}^p G_j = G_1\neq\emptyset$, 
it follows from Theorem \ref{thm: symmetric} in the next subsection that 
{
\bes
\rho_{\max}^{NL}(X_{1:p},\nu,W) \le \lambda_{\max}(R\circ W),\quad 
\rho_{\min}^{NL}(X_{1:p},\nu,W) \geq \lambda_{\min}(R\circ W). 
\ees
We note that $\nu$ is the counting measure here. } 
It remains to prove that $\lambda_{\max}(R{\circ W})$ and $\lambda_{\min}(R{\circ W})$ 
are attainable by functions $f_j(X_j)$. This would be simple under the second moment condition on $Y$ as 
we may simply set $f_j(X_j) = X_j$ {to achieve $\Corr(X_{1:p})=R$.} 
In the case of $\E[Y^2]=\infty$, 
we prove that $R$ is in the closure of the {correlation} matrices generated by $(f_j(X_j), j\le p)$. 
This will be done below by proving 
\bel{pf-th-2-0}
\lim_{t\to 0+} \rho\big(\sin(t X_j - m_jc_t),\sin(t X_j - m_kc_t)\big) = R_{j,k},\quad 1\le j<k\le p, 
\eel
where $c_t\in (-\pi/2,\pi/2)$ is the solution of 
\bes
\E[ \sin(tY-c_t)] = 0,\quad \hbox{ or equivalently }\ \  \frac{\E[ \sin(tY)]}{\E[\cos(tY)]} = \tan(c_t). 
\ees
{We shall choose the sequence $t\to 0+$ such that for each $t$, $\P\{\sin(t(Y_1-Y_2))=0\}<1$ so that $\P\{\sin(tY)=0\}<1$ 
and $\P\{\sin(tY-c_t)=0\}<1$. This is always feasible when $Y$ is non-degenerate.} 

As $\E[ \sin(tY)] \to 0$ and $\E[\cos(tY)]\to 1$, 
it suffices to consider small $t>0$ satisfying $|c_t|\le 1$. 
Let $Y' = tY - c_t$. 
As $\big|\sin(y)(1-\cos(y))\big| \le \sin^2(y) + 2|\sin(y)| I_{\{ |y| > 2\}}$, we have 
\bel{pf-th-2-1}
\Big| \E\big[\sin(Y')\cos(Y')\big] \Big|
&=& \Big| \E\big[\sin(Y')(1-\cos(Y'))\big] \Big|
\cr &\le& \E[ \sin^2(Y')\big]  + \sqrt{\E[ \sin^2(Y')\big] \P\{|Y| > 1/t\}}. 
\eel
Let $Y_i' = tY_i - c_t$ and $S'_{a:m} = \sum_{i=a}^m Y'_i$. 
We shall prove that for $a\le b\le m\le n$
\bel{pf-th-2-2}
\lim_{t\to 0+} \rho\big(\sin(S'_{a:m}),\sin(S'_{b:n})\big) = \frac{(m-b+1)}{(m-a+1)^{1/2}(n-b+1)^{1/2}}. 
\eel
This implies \eqref{pf-th-2-0} with $a=b=1$, $m=m_j$ and $n=m_k$, but the more general 
$a$ and $b$ would provide {the} extension to sums of arbitrary subgroups of $Y_i$ later 
in Corollary \ref{cor-6}. 

Let $f_{a,m} = \sin(S'_{a:m})$. As $\sin(y+z) = \sin(y)\cos(z)+\cos(y)\sin(z)$. We write 
\bes
f_{a,m}  = \sum_{u=a}^m f_{a,m,u}\ \ \hbox{ where }\ \ 
f_{a,m,u} = \bigg(\prod_{i=a}^{u-1}\cos(Y'_i)\bigg)\sin(Y'_u)\cos(S'_{(u+1):m}). 
\ees
Let $a\le b\le m\le n$. 
As $\E[ \sin(Y'_a)]=0$, we have $\E[f_{a,m}]=0$ and $\E[f_{a,m,u}f_{b,n,v}]=0$ 
for $a\le u < b$ or for $m < v \le n$. 
For $b \le u\wedge v\le u\vee v \le m$, 
{
\bes
&& f_{a,m,u}f_{b,n,v} 
\\ \nonumber &=& \bigg(\prod_{i=a}^{u-1}\cos(Y'_i)\bigg)\sin(Y'_u)\cos(S'_{(u+1):m}) 
\bigg(\prod_{i=b}^{v-1}\cos(Y'_i)\bigg)\sin(Y'_v)\cos(S'_{(v+1):n})
\\ \nonumber &=& \sin(Y'_{u\wedge v})\cos(Y'_{u\wedge v})\sin(Y'_{u\vee v})g(Y'_i, a\le i\le n, i\neq u\wedge v)
\ees
}
for a certain function $g$ bounded by 1. Thus, as a consequence of \eqref{pf-th-2-1}
\bes
\big| \E\big[f_{a,m,u}f_{b,n,v}\big]\big|
&\le & \Big| \E\big[\sin(Y')\cos(Y')\big] \Big| \E\big[ |\sin(Y')|\big]
\cr &\le & \E\big[\sin^2(Y')\big] \Big(\sqrt{ \E\big[\sin^2(Y')\big]} +\sqrt{\P\big\{|Y| > 1/t \big\}}\Big)
\ees
for $b \le u\wedge v < u\vee v \le m$. Moreover, for $b \le u \le m$, 
\bes
&& \E\big[f_{a,m,u}f_{b,n,u}\big]
\cr & = & \E\big[\sin^2(Y'_u)\big] \E\bigg[ \bigg(\prod_{i=a}^{b-1}\cos(Y'_i)\bigg)
\bigg(\prod_{i=b}^{u-1}\cos^2(Y'_i)\bigg)\cos\big(S'_{(u+1):m}\big)\cos\big(S'_{(u+1):n}\big)\bigg]. 
\ees
Thus, as $Y'_i = tY_i - c_t\to 0$ in probability, we find that for all $a\le b\le m\le n$  
\bes
\lim_{t\to 0+}\frac{\E\big[\sin(S'_{a:m}) \sin(S'_{b:n})\big]}{\E\big[\sin^2(Y')\big]} 
= \lim_{t\to 0+}\sum_{u=a}^m\sum_{v=b}^n \frac{\E\big[f_{a,m,u}f_{b,n,v}\big]}{\E\big[\sin^2(Y')\big]} 
= \#\big\{b \le u = v \le m\big\}. 
\ees
This implies \eqref{pf-th-2-2} and completes the proof. 
\end{proof}
}


\subsection{Symmetric functions of groups of variables}
\label{sec: symmetric}

In this section, we consider a broader setting than {nested} sums considered in Section \ref{sec: partial sum}. We use $\{Y_{i}\}_{i\geq 1}$ to denote an infinite sequence of iid random variables and define 
random vectors $\bX_j = (Y_i, i\in G_j)$ for arbitrary sets of positive integers $G_j$ 
of finite size $m_j = |G_j|<\infty$. 
Again we are interested in the extreme nonlinear correlation 
{among $\bX_1,\ldots,\bX_p$.} 

As $\bX_j$ are vectors, 
we adjust the definition of the extreme nonlinear 
{correlations in \eqref{NL-corr-max-stoch} and \eqref{NL-corr-min-stoch} as follows: 
Given a $p\times p$ symmetric matrix $W=(W_{j,k})$ with $W_{j,k}\ge 0$, define} 
\bel{NL-corr-max-group}
\rho^{NL}_{\max,\,\rm symm} 
= {\rho^{NL}_{\max,\,\rm symm}(\bX_{1}, \cdots, \bX_{p},W)  
= \sup_{f_{1:p}\in\calF_{1:p}} \lam_{\max}\big(K_{W,f_{1:p}}\big),}
\eel
where $\calF_{1:p}= \{(f_1,\ldots,f_p): 0 < \Var(f_j(\bX_j))<\infty, f_j(y_1,\ldots,y_{m_j})$ symmetric$\ \forall 1\leq j\leq p\}$ and $K_{W,f_{1:p}}=\big(\Corr\big(f_j(\bX_j),f_k(\bX_k)\big)W_{j,k}\big)_{p\times p}$. 
Correspondingly, {define  
\bel{NL-corr-min-group}
\rho^{NL}_{\min,\,\rm symm} 
= \rho^{NL}_{\min,\,\rm symm}(\bX_{1}, \cdots, \bX_{p},W)  
= \inf_{f_{1:p}\in\calF_{1:p}} \lam_{\min}\big(K_{W,f_{1:p}}\big).
\eel
We omit $\nu$ in the notation as it is taken as the counting measure in $\{1,\ldots,p\}$ 
without loss of generality as discussed below \eqref{extreme-NL-kernel}.  
Here, the symmetry of $f_j$ means permutation invariance, 
$f_j(y_1,\ldots,y_{m_j}) = f_j(y_{i_1},\ldots,y_{i_{m_j}})$ 
for all permutations $i_1,\ldots,i_{m_j}$ of $1,\ldots,m_j$.}  
To avoid confusion, {we call} the above quantities 
extreme symmetric nonlinear correlations.  
We extend Theorem~\ref{thm: partial sum} to groups satisfying the following assumption.  

\noindent {\bf Assumption C:} {\it There exist certain sets $G_{0,j}$ of positive integers such that}
\bes
|G_{0,j}\cap G_{0,k}| = {\big(|G_{j}\cap G_{k}| -1\big)_+}\ \forall 1\le j < k\le p,\quad 
|G_{0,j}|\le |G_j|-1\ \forall 1\le j\le p. 
\ees
Assumption C holds when $\cap_{j=1}^p G_j \neq\emptyset$, as we can simply 
set $G_{0,j} = G_j\setminus \{i_0\}$ for a fixed $i_0\in \cap_{j=1}^p G_j$. Hence, for the special case that $G_j$ are nested with
$\emptyset \neq G_1 \subset G_2\subset \cdots \subset G_{p}$, Assumption C holds automatically.  
{However, $G_{0,j}$ do not need to have anything to do with $G_j$ beyond the specified conditions 
on their size and the size of their intersections.} 


\begin{theorem}\label{thm: symmetric}
Let $Y, Y_1, Y_2,\ldots$ be iid non-degenerate random variables and 
$\bX_j = (Y_i, i\in G_j)$ for arbitrary groups of positive integers 
$G_1,\ldots,G_p$ of finite size $m_j = |G_j|<\infty$. 
Let $\rho^{NL}_{\max,\,\rm symm}$ and $\rho^{NL}_{\min,\,\rm symm}$ 
be the extreme symmetric nonlinear correlations among {$\bX_1,\ldots,\bX_p$} as defined in 
\eqref{NL-corr-max-group} and \eqref{NL-corr-min-group} 
{with weight matrix $W$.}  
Let $R^{(\ell)}\in \R^{p\times p}$ be the matrix with elements 
\begin{equation}
R^{(\ell)}_{j,k} 
= {|G_j\cap G_k|\choose \ell}  {|G_j|\choose \ell}^{-1/2} {|G_k|\choose \ell}^{-1/2}
\label{eq: general R}
\end{equation}
for $1\le\ell\le\ell^*$, {with the convention $0/0=0$,} 
where 
$\ell^*=\max_{1\le j\le p}|G_j|$. Then, 
\bel{new-th-3-1} 
\rho_{\max, {\rm symm}}^{NL}
=\lambda_{\max}(R{\circ W}), 
\ \ \rho^{NL}_{\min,\,\rm symm} 
= \min_{1\le\ell\le\ell^*} \lam_{\min}\big({\big(R^{(\ell)}\circ W\big)_{J^{(\ell)},J^{(\ell)}}}\big), 
\eel
{with $R=R^{(1)}$, $J^{(\ell)}=\{1\le j\le p: |G_j|\ge \ell\}$ and 
$\circ$ being the Schur product.} 
If in addition Assumption C holds, then 
\bel{new-th-3-2} 
\rho^{NL}_{\min,\,\rm symm} = \lam_{\min}\big(R{\circ W}\big). 
\eel
\end{theorem}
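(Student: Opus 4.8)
The plan is to deduce the final identity \eqref{new-th-3-2} from the general spectral formula \eqref{new-th-3-1}, which I first obtain through the Hoeffding (ANOVA) decomposition of symmetric functions, and then to show that Assumption C forces the minimum over the degrees $\ell$ in \eqref{new-th-3-1} to be attained at $\ell=1$. For any symmetric $f_j$ with $\E f_j(\bX_j)=0$ I would write the Hoeffding decomposition $f_j(\bX_j)=\sum_{\ell=1}^{m_j}\sum_{S\subseteq G_j,\,|S|=\ell}h_{j,\ell}\big((Y_i)_{i\in S}\big)$ into symmetric, completely degenerate kernels $h_{j,\ell}$ of $\ell$ arguments. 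Because the $Y_i$ are iid, components attached to different index sets or different degrees are orthogonal, so $\E[f_j(\bX_j)f_k(\bX_k)]=\sum_\ell\binom{|G_j\cap G_k|}{\ell}c^{(\ell)}_{j,k}$ and $\Var(f_j(\bX_j))=\sum_\ell\binom{m_j}{\ell}c^{(\ell)}_{j,j}$, where $c^{(\ell)}_{j,k}=\E[h_{j,\ell}(Y_{1:\ell})h_{k,\ell}(Y_{1:\ell})]$ depends only on $\ell$; the matrix $(c^{(\ell)}_{j,k})$ is a Gram matrix, hence positive semidefinite, with correlation normalization $\widetilde\rho^{(\ell)}$. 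Substituting $\binom{|G_j\cap G_k|}{\ell}=R^{(\ell)}_{j,k}\binom{m_j}{\ell}^{1/2}\binom{m_k}{\ell}^{1/2}$ into the Rayleigh quotient for $\lambda_{\min}(K_{W,f_{1:p}})$, in rescaled variables $u^{(\ell)}_j=(a_j/\sqrt{\Var f_j})\big(\binom{m_j}{\ell}c^{(\ell)}_{j,j}\big)^{1/2}$ (supported on $J^{(\ell)}$), turns the numerator into $\sum_\ell\big(u^{(\ell)}\big)^{\intercal}\big(R^{(\ell)}\circ W\circ\widetilde\rho^{(\ell)}\big)u^{(\ell)}$ and the denominator into $\sum_\ell\|u^{(\ell)}\|^2$.

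Since each $\widetilde\rho^{(\ell)}$ is a correlation matrix, the Schur-product contraction (the mechanism of Lemma~\ref{lm-2}: Hadamard multiplication by a correlation matrix raises $\lambda_{\min}$) gives $\lambda_{\min}\big((R^{(\ell)}\circ W\circ\widetilde\rho^{(\ell)})_{J^{(\ell)}}\big)\ge\lambda_{\min}\big((R^{(\ell)}\circ W)_{J^{(\ell)}}\big)$, and summing the degree blocks yields $\lambda_{\min}(K_{W,f_{1:p}})\ge\min_\ell\lambda_{\min}\big((R^{(\ell)}\circ W)_{J^{(\ell)}}\big)$ for every admissible $f_{1:p}$ (this holds regardless of the sign of the minimum, as $\|u^{(\ell)}\|^2\ge0$). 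Attainability at the minimizing $\ell_0$ follows by taking $f_j=\sum_{S\subseteq G_j,\,|S|=\ell_0}h_0\big((Y_i)_{i\in S}\big)$ with the bounded degenerate kernel $h_0(y_1,\dots,y_{\ell_0})=\prod_i g(y_i)$, $\E g(Y)=0$, $\E g^2(Y)=1$: then $\Corr(f_j,f_k)=R^{(\ell_0)}_{j,k}$ on $J^{(\ell_0)}$, so the minimizing eigenvector of $(R^{(\ell_0)}\circ W)_{J^{(\ell_0)}}$, extended by zeros, is a test vector realizing the bound, and no moment condition is needed since $h_0$ is bounded. The maximum assertion is identical with all inequalities reversed.

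For the final statement, it remains to show under Assumption C that $\lambda_{\min}((R^{(\ell)}\circ W)_{J^{(\ell)}})\ge\lambda_{\min}(R\circ W)$ for every $\ell$. From $\binom{c}{\ell}=\tfrac{c}{\ell}\binom{c-1}{\ell-1}$ I would derive, for $|G_j\cap G_k|\ge1$, the exact factorization
\[
R^{(\ell)}_{j,k}=R^{(1)}_{j,k}\cdot\frac{\binom{|G_j\cap G_k|-1}{\ell-1}}{\sqrt{\binom{m_j-1}{\ell-1}\binom{m_k-1}{\ell-1}}}.
\]
With the sets $G_{0,j}$ from Assumption C, the matrix $P^{(\ell)}_{j,k}=\binom{|G_{0,j}\cap G_{0,k}|}{\ell-1}$ is the Gram matrix of degree-$(\ell-1)$ kernels on the $G_{0,j}$, hence $P^{(\ell)}\succeq0$; because $|G_{0,j}\cap G_{0,k}|=(|G_j\cap G_k|-1)_+$, scaling $P^{(\ell)}$ by $\diag\big(\binom{m_j-1}{\ell-1}^{-1/2}\big)$ reproduces exactly the off-diagonal factor above, while $|G_{0,j}|\le m_j-1$ forces its diagonal entries to be at most $1$. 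Resetting those diagonal entries to $1$ adds a nonnegative diagonal, preserving positive semidefiniteness, and produces a correlation matrix $\widetilde Q^{(\ell)}$ with $(R^{(\ell)}\circ W)_{J^{(\ell)}}=(R\circ W)_{J^{(\ell)}}\circ\widetilde Q^{(\ell)}$ (the diagonal matches because $R^{(\ell)}$ and $R$ both have unit diagonal). A further Schur contraction and eigenvalue interlacing for the principal submatrix on $J^{(\ell)}\subseteq\{1,\dots,p\}$ give $\lambda_{\min}((R^{(\ell)}\circ W)_{J^{(\ell)}})\ge\lambda_{\min}((R\circ W)_{J^{(\ell)}})\ge\lambda_{\min}(R\circ W)$. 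As $\ell=1$ attains equality, the minimum in \eqref{new-th-3-1} equals $\lambda_{\min}(R\circ W)$, which is \eqref{new-th-3-2}.

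The main obstacle is the construction $(R^{(\ell)}\circ W)_{J^{(\ell)}}=(R\circ W)_{J^{(\ell)}}\circ\widetilde Q^{(\ell)}$ with $\widetilde Q^{(\ell)}$ a genuine correlation matrix: the binomial factorization supplies the correct off-diagonal Schur factor, but its positive semidefiniteness is not automatic — it is precisely Assumption C, by realizing $\binom{|G_j\cap G_k|-1}{\ell-1}$ as an honest intersection pattern of groups $G_{0,j}$ of size $\le m_j-1$, that both certifies $P^{(\ell)}\succeq0$ and leaves the diagonal correctable to $1$. (Without it the minimum genuinely can sit at some $\ell\ge2$, which is why \eqref{new-th-3-1} keeps the $\min_\ell$.) The remaining care lies in the orthogonality bookkeeping of the Hoeffding step and in verifying attainability of each $R^{(\ell)}$ without moment assumptions.
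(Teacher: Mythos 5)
Your proposal follows the same backbone as the paper's own proof: the Hoeffding/ANOVA decomposition (the paper's Lemma \ref{lm-3}), the degree-by-degree reduction of the Rayleigh quotient of $K_{W,f_{1:p}}$ to the blocks $(R^{(\ell)}\circ W)_{J^{(\ell)},J^{(\ell)}}$, attainability of each block via the product kernels $\binom{|G_j|}{\ell}^{-1/2}\sum_{|S|=\ell,S\subseteq G_j}\prod_{i\in S}h_0(Y_i)$ with bounded $h_0$ (hence no moment condition), and the use of Assumption C through Gram matrices built on the sets $G_{0,j}$. Your treatment of the minimum is in fact slightly cleaner than the paper's Lemma \ref{lem: lower general}: by rescaling the Gram matrix $P^{(\ell)}_{j,k}=\binom{|G_{0,j}\cap G_{0,k}|}{\ell-1}$ and resetting its diagonal to one, you obtain a genuine unit-diagonal PSD factor $\widetilde Q^{(\ell)}$, for which $\lambda_{\min}(A\circ\widetilde Q^{(\ell)})\ge\lambda_{\min}(A)$ holds for every symmetric $A$ regardless of sign; the paper's factor only has diagonal $\le 1$, which forces it to treat the case $\lambda_{\min}(R\circ W)\le 0$ first and then handle the general case by the shift $W-cI_{p\times p}$ with $c=\min_j W_{j,j}$. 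Your diagonal-resetting removes that case analysis, and the verification that the off-diagonal entries match (via $\binom{c}{\ell}=\tfrac{c}{\ell}\binom{c-1}{\ell-1}$ and $|G_{0,j}\cap G_{0,k}|=(|G_j\cap G_k|-1)_+$) is correct.

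There is, however, one genuine gap: the maximum identity in \eqref{new-th-3-1} is \emph{unconditional} (no Assumption C), and it is not obtained by running your argument ``with all inequalities reversed.'' Reversal delivers only $\rho^{NL}_{\max,\,\rm symm}=\max_{1\le\ell\le\ell^*}\lambda_{\max}\big((R^{(\ell)}\circ W)_{J^{(\ell)},J^{(\ell)}}\big)$, and the collapse of this maximum to $\ell=1$ cannot be routed through your factorization $(R^{(\ell)}\circ W)_{J^{(\ell)}}=(R\circ W)_{J^{(\ell)}}\circ\widetilde Q^{(\ell)}$, since building a PSD $\widetilde Q^{(\ell)}$ is exactly what requires Assumption C. The paper closes this step in \eqref{eq: upper geneal} by a different mechanism: the entrywise bound $0\le R^{(\ell)}_{j,k}W_{j,k}\le R^{(1)}_{j,k}W_{j,k}$ combined with the Perron--Frobenius-type monotonicity of $\lambda_{\max}$ for symmetric entrywise-nonnegative matrices, plus the principal-submatrix inequality $\lambda_{\max}\big((R^{(1)}\circ W)_{J^{(\ell)},J^{(\ell)}}\big)\le\lambda_{\max}(R^{(1)}\circ W)$ --- all of which hinge on $W_{j,k}\ge 0$ and have no $\lambda_{\min}$ analogue (consistent with your own observation that the minimum can genuinely sit at some $\ell\ge 2$ absent Assumption C). The omission is easily repaired from material you already have: your binomial factorization shows the Schur factor lies in $[0,1]$ entrywise even without Assumption C, because $\binom{|G_j\cap G_k|-1}{\ell-1}\le\binom{m_j-1}{\ell-1}^{1/2}\binom{m_k-1}{\ell-1}^{1/2}$, whence $R^{(\ell)}_{j,k}\le R_{j,k}$; but the monotonicity-of-$\lambda_{\max}$ step must be stated and invoked, and as written your proof of the first half of \eqref{new-th-3-1} is incomplete. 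A minor additional point: in the degree-block step, $\widetilde\rho^{(\ell)}$ is a correlation matrix only on the support $\{j: c^{(\ell)}_{j,j}>0\}$, so you need one interlacing step from that support up to $J^{(\ell)}$; this is routine but should be said.
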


{The first part of \eqref{new-th-3-1} asserts that the maximum symmetric nonlinear correlation 
is identical to its linear version, while the second part gives a formula for the minimum 
symmetric nonlinear correlation. Under Assumption C, \eqref{new-th-3-2} asserts the 
equality between the minimum symmetric nonlinear correlation and its linear version.} 
The connection between Theorem \ref{thm: partial sum} and 
Theorem \ref{thm: symmetric} can be built under the observation 
that $f_j(\sum_{i=1}^{m_j} Y_i)$ is a symmetric function 
of $\bX_j=\{Y_i\}_{i\in G_j}$ when $G_j=\{1,2,\cdots,m_j\}$, 
and the corresponding index sets $G_j$ 
satisfy the Assumption C due to the nested structure of $\{G_j\}_{1\leq j\leq p}$. 
For the case $p=2$, 
Theorem \ref{thm: symmetric} serves as an extension of Dembo, Kagan and Shepp (2001) 
from  {functions $f_j(\sum_{i=1}^{m_j} Y_i)$ of the two sums to any symmetric functions of iid random variables and of \cite{yu2008maximal} from two $f_j(\sum_{i\in G_j} Y_i)$ with arbitrary $G_j$.}

An interesting aspect of Theorem \ref{thm: symmetric}
is that {under assumption C} 
the extreme symmetric nonlinear correlation is attained by {sums of the form} 
\begin{equation}
f_j(\bX_j)= \sum_{i\in G_j} h_0(Y_i) \quad \text{for}\; 1\leq j\leq p, 
\label{eq: achievable function}
\end{equation}
for any function $h_0$ with $0<\Var(h_0(Y))<\infty$, e.g. $h_0(Y_i)=Y_i$ when $Y_i$ has finite variance. 
That is to say, among symmetric functions, the most extreme multivariate correlations are achieved 
by the linear summation of iid random variables. 
{The following corollary, based on Theorem \ref{thm: symmetric} and \eqref{pf-th-2-2} in the proof 
of Theorem \ref{thm: partial sum}, asserts that the extreme symmetric nonlinear correlations 
for groups of $Y_i$ are achieved 
by functions of the corresponding sums of $Y_i$ without assuming the finite second moment condition. 

\begin{corollary}\label{cor-6} Let $\bX_j = (Y_i, i\in G_j)$ and $S_{G_j} = \sum_{i\in G_j}Y_i$ 
with iid non-degenerate $Y_i$. Then, 
\bes
& \rho^{NL}_{\max,\,\rm symm}({\bX_{1}, \cdots, \bX_{p},W}) 
= \rho^{NL}_{\max}\big({(S_{G_1},\ldots,S_{G_p}),\nu,W}\big) 
= \lam_{\max}\big(R{\circ W}\big), 
\cr & \rho^{NL}_{\min,\,\rm symm}({\bX_{1}, \cdots, \bX_{p},W}) 
= \rho^{NL}_{\min}\big({(S_{G_1},\ldots,S_{G_p}),\nu,W}\big) 
= \lam_{\min}\big(R{\circ W}\big),
\ees
under Assumption C, {where $\nu$ is taken as the counting measure in 
the extreme nonlinear correlations 
in \eqref{NL-corr-max-stoch} and \eqref{NL-corr-min-stoch}.} 
Consequently, \eqref{theme} holds for $X_j=S_{G_j}$ when $\E[Y^2]<\infty$. 
\end{corollary}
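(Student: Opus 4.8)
The plan is to read off the two outer equalities from Theorem~\ref{thm: symmetric} and to establish the two inner equalities by a function-class inclusion together with the trigonometric construction already developed for Theorem~\ref{thm: partial sum}. Under Assumption C, Theorem~\ref{thm: symmetric} gives $\rho^{NL}_{\max,\,\rm symm}=\lam_{\max}(R\circ W)$ and, via \eqref{new-th-3-2}, $\rho^{NL}_{\min,\,\rm symm}=\lam_{\min}(R\circ W)$ with $R=R^{(1)}$, so only the identification with the scalar-sum quantities remains. First I would note that every function $f_j(S_{G_j})$ of the scalar sum $S_{G_j}=\sum_{i\in G_j}Y_i$ is, by construction, a permutation-invariant function of the vector $\bX_j=(Y_i,i\in G_j)$; hence the admissible class for the scalar-sum correlations is contained in $\calF_{1:p}$. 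Taking suprema and infima over this smaller class yields the one-sided bounds
\bes
\rho^{NL}_{\max}\big((S_{G_1},\ldots,S_{G_p}),\nu,W\big)\le\lam_{\max}(R\circ W),\quad
\rho^{NL}_{\min}\big((S_{G_1},\ldots,S_{G_p}),\nu,W\big)\ge\lam_{\min}(R\circ W).
\ees

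For the reverse inequalities I would exhibit functions of the scalar sums whose correlation matrix approaches $R$. The natural candidates are the same ones used in Theorem~\ref{thm: partial sum}: set $f_j^{(t)}(S_{G_j})=\sin\big(tS_{G_j}-m_jc_t\big)=\sin(S'_{G_j})$, where $S'_{G_j}=\sum_{i\in G_j}(tY_i-c_t)$ and $c_t$ solves $\E[\sin(tY-c_t)]=0$, along the same sequence $t\to0+$ that avoids the degenerate case $\P\{\sin(tY-c_t)=0\}=1$. Each $f_j^{(t)}$ is bounded, centered, and non-degenerate, hence admissible as a function of the scalar sum. The crucial observation is that $\rho\big(\sin(S'_{G_j}),\sin(S'_{G_k})\big)$ depends on the pair $(G_j,G_k)$ only through the three cardinalities $|G_j|$, $|G_k|$ and $|G_j\cap G_k|$: decomposing $S'_{G_j}$ and $S'_{G_k}$ into the three independent blocks indexed by $G_j\cap G_k$, $G_j\setminus G_k$ and $G_k\setminus G_j$ shows their joint law is a function of these sizes alone. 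Matching the sizes to interval lengths $m-a+1=|G_j|$, $n-b+1=|G_k|$ and $m-b+1=|G_j\cap G_k|$, the limit recorded in \eqref{pf-th-2-2} applies and gives $\rho\big(\sin(S'_{G_j}),\sin(S'_{G_k})\big)\to|G_j\cap G_k|/\sqrt{|G_j|\,|G_k|}=R_{j,k}$ for every pair. Thus the correlation matrix of $(f_1^{(t)},\ldots,f_p^{(t)})$ converges entrywise to $R$, and since the maps $A\mapsto\lam_{\max}(A\circ W)$ and $A\mapsto\lam_{\min}(A\circ W)$ are continuous, $\lam_{\max}(R\circ W)$ and $\lam_{\min}(R\circ W)$ are approached in the limit by admissible functions of the sums; together with the one-sided bounds this closes the chain of equalities.

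I expect the only real obstacle to be the passage from the interval computation \eqref{pf-th-2-2} to arbitrary groups, i.e.\ verifying that in the expansion $\sin(S'_{G_j})=\sum_{u\in G_j}f_{G_j,u}$ the leading cross term is carried solely by the matching indices $u=v\in G_j\cap G_k$ while the $u\ne v$ terms vanish in the limit by \eqref{pf-th-2-1}; this is precisely what the block decomposition above delivers, reducing each pairwise computation to the interval case already treated, so no new estimate is needed. For the final assertion, when $\E[Y^2]<\infty$ one has $\Corr(S_{G_j},S_{G_k})=|G_j\cap G_k|/\sqrt{|G_j|\,|G_k|}=R_{j,k}$, so $R$ is the genuine correlation matrix of the standardized sums and therefore $\rho^{L}_{\max}=\lam_{\max}(R\circ W)$ and $\rho^{L}_{\min}=\lam_{\min}(R\circ W)$; combined with the equalities just proved, this is exactly \eqref{theme} for $X_j=S_{G_j}$.
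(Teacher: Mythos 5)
Your proposal is correct and follows essentially the paper's intended route: the outer equalities come from Theorem~\ref{thm: symmetric} (with Assumption C for the minimum via \eqref{new-th-3-2}), the one-sided bounds from the inclusion of functions of $S_{G_j}$ among symmetric functions of $\bX_j$, and attainability from the sine construction \eqref{pf-th-2-2}, whose general indices $a\le b\le m\le n$ were introduced in the proof of Theorem~\ref{thm: partial sum} precisely to cover arbitrary groups after the relabeling-by-cardinalities reduction you describe. Nothing is missing.
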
 
}

The proof of Theorem \ref{thm: symmetric}
relies on the \cite{hoeffding1948class, hoeffding1961strong} decomposition of 
{symmetric functions} of random variables, stated as Lemma \ref{lm-3} below; 
See Lemma 1 in \cite{hoeffding1961strong}, the decomposition lemma in \cite{efron1981jackknife}, 
and Lemma 1 in \cite{dembo2001remarks}. 

 
\begin{lemma}\label{lm-3} 
Let $\bY = (Y_1,\cdots,Y_m)$ with iid components $Y_i$ and $f_0(\bY)=f_0(Y_1,\cdots,Y_m)$ 
with a symmetric function $f_0(y_1,\ldots,y_m)$. 
Suppose $\E[f_0(\bY) ]=0$ and $\E[f_0^2(\bY)]<\infty$. 
Define $f_{0,1}(y_1)=\E[f_0(\bY)|Y_1=y_1]$ and for $k=2,\ldots,m$ define 
\bes
f_{0,k}({y_{1:k}}) = \E\left[ f_0(\bY) 
- \sum_{j=1}^{k-1} \sum_{1\le i_1<\cdots<i_j\le m}f_{0,j}(Y_{i_1},\ldots,Y_{i_j})\Bigg|
{Y_{1:k}=y_{1:k}}\right]. 
\ees
Then, the following expansion holds, 
\begin{equation}
f_0(\bY)
=\sum_{\ell=1}^m \sum_{1\le i_1<\cdots<i_\ell\le m}f_{0,\ell}(Y_{i_1},\ldots,Y_{i_\ell}), 
\label{eq: decomposition}
\end{equation}
and that for all $s =1,\cdots, \ell$ and $\ell =1,\cdots, m$
\begin{equation}
\E\Big[ f_{0,\ell}(Y_{i_1},\ldots,Y_{i_\ell})\Big| \{Y_{i_1},\ldots,Y_{i_\ell}\}\backslash Y_{i_s}\Big] =0.
\label{eq: orthog}
\end{equation}
Consequently, 
\bel{SS}
\E\big[f_0^2(\bY)\big] 
=\sum_{\ell=1}^m {m\choose \ell} \E\Big[f_{0,\ell}^2({Y_{1:\ell}})\Big]. 
\eel
\end{lemma}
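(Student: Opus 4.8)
The plan is to establish the degeneracy/orthogonality property \eqref{eq: orthog} first, and then to derive the expansion \eqref{eq: decomposition} and the variance identity \eqref{SS} as consequences. Throughout I would note that the recursive construction preserves permutation symmetry, so each $f_{0,\ell}$ is itself a symmetric function of its $\ell$ arguments; since the $Y_i$ are iid, it then suffices to verify \eqref{eq: orthog} in the single representative case $s=\ell$ with index set $\{1,\ldots,\ell\}$, i.e. $\E[f_{0,\ell}(Y_{1:\ell})\mid Y_{1:\ell-1}]=0$, all other cases following by symmetry and exchangeability.

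The central observation, which I would record as a preliminary reduction, is that a lower-order component carrying an \emph{external} index is annihilated by conditioning. Concretely, suppose inductively that $f_{0,1},\ldots,f_{0,k-1}$ already satisfy \eqref{eq: orthog}. If $\{i_1,\ldots,i_j\}\not\subseteq\{1,\ldots,k\}$, choose an index $i_s\notin\{1,\ldots,k\}$; then $\E[f_{0,j}(Y_{i_1},\ldots,Y_{i_j})\mid Y_{1:k}]$ reduces, by independence of the irrelevant coordinates, to conditioning $f_{0,j}$ on a subset of $\{Y_{i_1},\ldots,Y_{i_j}\}\setminus Y_{i_s}$, which vanishes by the induction hypothesis and the tower property. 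This collapses the defining recursion to the cleaner form
\begin{equation*}
f_{0,k}(Y_{1:k}) = \E[f_0(\bY)\mid Y_{1:k}] - \sum_{j=1}^{k-1}\sum_{1\le i_1<\cdots<i_j\le k}f_{0,j}(Y_{i_1},\ldots,Y_{i_j}),
\end{equation*}
in which only index subsets of $\{1,\ldots,k\}$ survive.

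With this reduced recursion in hand I would prove \eqref{eq: orthog} by induction on $k$. The base case $k=1$ is precisely the centering $\E[f_{0,1}(Y_1)]=\E[f_0(\bY)]=0$. For the inductive step I would apply $\E[\,\cdot\mid Y_{1:k-1}]$ to the reduced recursion, splitting the double sum according to whether the index $k$ appears: the terms containing $k$ vanish by the induction hypothesis (they carry the external index $k$ relative to the conditioning set $\{1,\ldots,k-1\}$), and the surviving terms, together with $\E[f_0\mid Y_{1:k-1}]$, telescope against the reduced recursion at level $k-1$ to give exactly $0$. The expansion \eqref{eq: decomposition} is then immediate: setting $k=m$ in the reduced recursion and using $\E[f_0(\bY)\mid Y_{1:m}]=f_0(\bY)$, one rearranges to recover $f_0(\bY)=\sum_{\ell=1}^m\sum_{1\le i_1<\cdots<i_\ell\le m}f_{0,\ell}(Y_{i_1},\ldots,Y_{i_\ell})$.

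Finally, \eqref{SS} follows by expanding the square of \eqref{eq: decomposition} and checking that cross terms vanish: for index sets $A\neq B$ with some $i\in A\setminus B$, conditioning the product $f_{0,\ell}(Y_A)f_{0,\ell'}(Y_B)$ on all variables except $Y_i$ leaves $f_{0,\ell'}(Y_B)$ measurable and multiplies it by $\E[f_{0,\ell}(Y_A)\mid Y_A\setminus Y_i]=0$, so only the diagonal terms $A=B$ survive; collecting the $\binom{m}{\ell}$ equal contributions at each level by the iid assumption yields \eqref{SS}. I expect the one genuinely delicate point to be the preliminary reduction, namely correctly justifying that conditioning on $Y_{1:k}$ kills every lower-order term carrying an index outside $\{1,\ldots,k\}$, since everything downstream — the telescoping in the induction and the orthogonality of cross terms — is a clean consequence of that fact combined with the built-in symmetry.
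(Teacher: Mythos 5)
Your proof is correct. One thing you could not have known: the paper offers no proof of Lemma \ref{lm-3} at all, quoting it instead from the literature (Lemma 1 of \cite{hoeffding1961strong}, the decomposition lemma of \cite{efron1981jackknife}, and Lemma 1 of \cite{dembo2001remarks}), so there is no in-paper argument to match and the natural comparison is with those classical treatments. They typically define the components in closed form by inclusion--exclusion, $f_{0,\ell}(y_{1:\ell})=\sum_{S\subseteq\{1,\ldots,\ell\}}(-1)^{\ell-|S|}\E\big[f_0(\bY)\,\big|\,Y_i=y_i,\ i\in S\big]$, verify the degeneracy \eqref{eq: orthog} directly from that formula, and then obtain the recursion and the expansion \eqref{eq: decomposition} as consequences; you instead take the paper's recursive definition as primitive and run a joint induction. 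Your pivotal step --- that once \eqref{eq: orthog} holds through level $k-1$, independence reduces $\E[f_{0,j}(Y_{i_1},\ldots,Y_{i_j})\mid Y_{1:k}]$ to conditioning on a proper subset of the arguments whenever some index exceeds $k$, after which the tower property and degeneracy annihilate it, collapsing the defining sum over subsets of $\{1,\ldots,m\}$ to subsets of $\{1,\ldots,k\}$ --- is exactly the right mechanism, and the telescoping against the reduced recursion at level $k-1$ delivers the inductive step cleanly; the cross-term cancellation yielding \eqref{SS} is then routine (each $f_{0,\ell}$ lies in $L_2$ as a finite combination of conditional expectations of an $L_2$ variable, so the cross products are integrable by Cauchy--Schwarz). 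Two cosmetic repairs: in the cross-term argument, when $A\subsetneq B$ the set $A\setminus B$ is empty, so one should pick $i\in B\setminus A$ and exchange the roles of the two factors (the argument is symmetric in $A$ and $B$); and the permutation symmetry of each $f_{0,k}$, which you invoke to reduce \eqref{eq: orthog} to the representative case $\E[f_{0,\ell}(Y_{1:\ell})\mid Y_{1:\ell-1}]=0$, deserves to be folded explicitly into the induction --- it is immediate from the reduced recursion once the lower levels are symmetric. Net comparison: the inclusion--exclusion route buys an explicit formula from which degeneracy is read off at once, while your route buys a self-contained proof tailored to the recursive definition exactly as the lemma states it, at the modest cost of a double induction.
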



\begin{proof}[Proof of Theorem \ref{thm: symmetric}]  
{Assume without generality $\E[f_j(\bX_j)]=0, \E[f_j^2(\bX_j)]=1$ for all $j$ as 
$\rho_{\max, {\rm symm}}^{NL}$ and $\rho^{NL}_{\min,\,\rm symm}$ 
are defined through the correlations between $f_j(\bX_j)$ and $f_k(\bX_k)$.} 
Let $G^{(\ell)} =\{ (i_1, \cdots, i_\ell): i_1<\cdots<i_\ell,    
i_{s} \in G \; \text{for} \; 1\leq s\leq \ell\}$ for all subsets $G$ of positive integers. 
Since $f_j(\bX_j)$ are symmetric functions of {$\{Y_i\}_{i\in G_j}$,} 
\eqref{eq: decomposition} gives  
\begin{equation}
f_j(\bX_j)= \sum_{\ell=1}^{m_j} \sum_{{(i_1, \cdots, i_\ell) \in G_{j}^{(\ell)}}} 
f_{j,\ell}(Y_{i_1},\ldots,Y_{i_\ell}). 
\label{eq: expression of symmetric}
\end{equation}
We first apply \eqref{eq: orthog} and obtain the following expression for the cross-product,
\bes
\E\Big[f_{j,\ell}(Y_{i_1},\ldots,Y_{i_\ell}) f_{k,{\ell'}}(Y_{i'_1},\ldots,Y_{i'_{\ell'}}) \Big] = 0 
\ees
when $\{i_1,\ldots,i_\ell\} \neq \{i'_1,\ldots,i'_{\ell'}\}$. It follows that 
\bel{pf-th-3-1}
\E f_j(\bX_j) f_k(\bX_k) 
&= &\E \sum_{\ell=1}^{|G_{j}\cap G_{k}|} \sum_{{(i_1,\ldots,i_\ell) \in (G_{j}\cap G_{k})^{(\ell)}}}
f_{j,\ell}(Y_{i_1},\ldots,Y_{i_\ell})
f_{k,\ell}(Y_{i_1},\ldots,Y_{i_\ell})
\cr &= & \sum_{\ell=1}^{\ell^*} {|G_{j}\cap G_{k}|\choose \ell}\E\Big[ f_{j,\ell}({Y_{1:\ell}})
f_{k,\ell}({Y_{1:\ell}})\Big]
\eel
{with} the convention ${m\choose \ell}=0$ for $\ell>m$. 
Let $R^{(\ell)}\in \R^{p\times p}$ be the matrix defined in \eqref{eq: general R}. 
Let $g_{j,\ell} = g_{j,\ell}({Y_{1:\ell}})={m_j\choose \ell}^{1/2}f_{j,\ell}({Y_{1:\ell}})$.  
{For $u=(u_1,\ldots,u_p)^{\intercal}$ with $\|u\|_2=1$, \eqref{pf-th-3-1} provides
\bes
u^{\intercal}K_{W,f_{1:p}} u
& = & \E \left(\sum_{j=1}^p\sum_{k=1}^p W_{j,k}u_ju_k f_{j}(\bX_j)f_{k}(\bX_k)\right)
\cr &=&\sum_{j=1}^p\sum_{k=1}^p W_{j,k}u_ju_k
 \sum_{\ell=1}^{\ell^*} {|G_{j}\cap G_{k}|\choose \ell}\E\left[ f_{j,\ell}(Y_{1:\ell})
f_{k,\ell}(Y_{1:\ell})\right]  
\cr &=& \sum_{\ell=1}^{\ell^{*}} \E\left[\sum_{j\in J^{(\ell)}} \sum_{k\in J^{(\ell)}}  
R^{(\ell)}_{j,k}W_{j,k}u_ju_k g_{j,\ell}(Y_{1:\ell})g_{k,\ell}(Y_{1:\ell})\right]
\cr &\le & \max_{1\le\ell\le \ell^{*}} \lam_{\max}\Big(\big(R^{(\ell)}\circ W\big)_{J^{(\ell)},J^{(\ell)}}\Big)  
\sum_{\ell=1}^{\ell^{*}} \E \left[\sum_{j\in J^{(\ell)}} u_j^2g_{j,\ell}^2(Y_{1:\ell})\right] 
\cr &=& \max_{1\le\ell\le \ell^{*}}\lam_{\max}
\Big(\big(R^{(\ell)}\circ W\big)_{J^{(\ell)},J^{(\ell)}}\Big) 
\sum_{j=1}^p\E \left[u_j^2f_j^2(\bX_j)\right]
\cr &=& \max_{1\le\ell\le \ell^{*}}\lam_{\max}
\Big(\big(R^{(\ell)}\circ W\big)_{J^{(\ell)},J^{(\ell)}}\Big),
\ees
where the second to the last equality follows from \eqref{SS} and 
the fact that $g_{j,\ell} = 0$ for $\ell> m_j = |G_j|$.
Similarly, for all $u=(u_1,\ldots,u_p)^{\intercal}$ with $\|u\|_2=1$, 
\bes
u^{\intercal}K_{W,f_{1:p}} u
\ge \min_{1\le\ell\le \ell^{*}}\lam_{\min}\Big(\big(R^{(\ell)}\circ W\big)_{J^{(\ell)},J^{(\ell)}}\Big).
\ees
Thus, by \eqref{NL-corr-max-group} and \eqref{NL-corr-min-group}, 
\bel{eq: general bound} 
&& \rho_{\max,{\rm symm}}^{NL}  \le \max_{1\le\ell\le \ell^*}
\lam_{\max}\Big(\big(R^{(\ell)}\circ W\big)_{J^{(\ell)},J^{(\ell)}}\Big), 
\cr && \rho_{\min,{\rm symm}}^{NL} \ge \min_{1\le\ell\le \ell^{*}}
\lam_{\min}\Big(\big(R^{(\ell)}\circ W\big)_{J^{(\ell)},J^{(\ell)}}\Big).
\eel
To prove \eqref{eq: general bound} holds with equality, 
we pick a specific $f_{1:p}$ for each $\ell$ as follows. 
Let $h_0$ be a function satisfying $\E[h_0(Y)]=0$ and $\E[h_0^2(Y)]=1$. 
For $j\in J^{(\ell)}$ define 
\bel{h_0}
h^{(\ell)}_{0,j}(\bX_j) = 
{|G_j|\choose \ell}^{-1/2}\sum_{|S|=\ell,S\subseteq G_j} \prod_{i\in S}h_0(Y_i) 
\eel
as symmetric functions of $\bX_j$. 
For $\{j,k\}\subset  J^{(\ell)}$ we have 
\bes
\E\Big[ h^{(\ell)}_{0,j}(\bX_j)h^{(\ell)}_{0,k}(\bX_k)\Big] 
= {|G_j\cap G_k|\choose \ell}  {|G_j|\choose \ell}^{-1/2} {|G_k|\choose \ell}^{-1/2} 
I_{\{\ell\le |G_j\cap G_k|\}}
= R^{(\ell)}_{j,k}.
\ees
Thus, when $f_j(\bX_j)=h^{(\ell)}_{0,j}(\bX_j)$ for $j \in J^{(\ell)}$, we have 
$(K_{W,f_{1:p}})_{J^{(\ell)},J^{(\ell)}}=\big(R^{(\ell)}\circ W\big)_{J^{(\ell)},J^{(\ell)}}$. 
As this holds for every $\ell\le\ell^*$, \eqref{eq: general bound} holds with equality 
by \eqref{NL-corr-max-group} and \eqref{NL-corr-min-group}. 
We note that $\E[f_j(\bX_j)f_k(\bX_k)]\neq 0 = R^{(\ell)}_{j,k}$ 
typically holds for $j\not\in J^{(\ell)}$ or $k\not\in J^{(\ell)}$ 
as $\E[f_j^2(\bX_j)]=1$. 

It remains to prove that the extreme eigenvalues in \eqref{eq: general bound} 
are achieved with $\ell=1$. For the maximum eigenvalue, we notice that by \eqref{eq: general R}, 
\bes
R^{(\ell)}_{j,k} 
&=& {|G_j\cap G_k|\choose \ell}  {|G_j|\choose \ell}^{-1/2} {|G_k|\choose \ell}^{-1/2}
I_{\{|G_j\cap G_k|\ge \ell\}}
\cr &=& 
\frac{|G_j\cap G_{k}|(|G_j\cap G_{k}|-1)\cdots (|G_j\cap G_{k}|-l+1)I_{\{|G_j\cap G_k|\ge \ell\}}}
{\sqrt{|G_{j}|(|G_{j}|-1)\cdots (|G_{j}|-l+1)}\sqrt{|G_{k}|(|G_{k}|-1)\cdots (|G_{k}|-l+1)}}
\cr &\leq& \frac{|G_j\cap G_{k}|}{\sqrt{|G_{j}| \cdot |G_{k}|}}, 
\ees
so that $0\leq R^{(\ell)}_{{j,k}}W_{j,k}\leq R^{(1)}_{{j,k}}W_{j,k}$ 
for all $\{j,k\}\subseteq J^{(\ell)}$. 
Thus, 
\bel{eq: upper geneal} 
\lam_{\max}\Big(\big(R^{(\ell)}\circ W\big)_{J^{(\ell)},J^{(\ell)}}\Big) 
\le \lam_{\max}\Big(\big(R^{(1)}\circ W\big)_{J^{(\ell)},J^{(\ell)}}\Big) 
\le \lam_{\max}\big(R^{(1)}\circ W\big). 
\eel 
due to the element-wise positiveness of $\big(R^{(\ell)}\circ W\big)_{J^{(\ell)},J^{(\ell)}}$. 
As \eqref{eq: general bound} holds with equality, this completes the proof of \eqref{new-th-3-1}.}  

The remaining of the proof is to characterize 
$\min_{1\le\ell\le\ell^*} \lam_{\min}\big({\big(R^{(\ell)}\circ W\big)_{J^{(\ell)},J^{(\ell)}}}\big)$ under Assumption C. 
As the result can be of independent interest, we state {it} 
in the following lemma and supply a proof immediately after the lemma.

\begin{lemma}
Under Assumption C, we have {
$$\min_{1\le\ell\le\ell^*} \lam_{\min}\big(\big(R^{(\ell)}\circ W\big)_{J^{(\ell)},J^{(\ell)}}\big)
=\lam_{\min}\big(R\circ W\big)
$$
where $R^{(\ell)}$ are defined in \eqref{eq: general R}, $R^{(1)}=R$ 
and $\ell^*=\max_{1\le j\le p}|G_j|$.} 
\label{lem: lower general}
\end{lemma}

\noindent {\it Proof of Lemma \ref{lem: lower general}.}
Under Assumption C, we set 
\bes
g^{({\ell-1})}_{0,j}(\bX_j) = {|G_{j}|-1\choose {\ell-1}}^{-1/2}\sum_{|S|={\ell-1},S\subseteq G_{0,j}} \prod_{i\in S}h_0(Y_i),\quad {j\in J^{(\ell)}, 2\le \ell\le \ell^*,}
\ees
with the $h_0$ in \eqref{h_0}. 
Similar to the proof of {the first part of} \eqref{eq: general bound} with equality, we have 
\bes
&& \E\Big[ g^{(\ell-1)}_{0,j}(\bX_j)g^{(\ell-1)}_{0,k}(\bX_k)\Big] 
\cr &=& {|G_{0,j}\cap G_{0,k}|\choose \ell-1}{|G_{j}|-1\choose \ell - 1}^{-1/2} {|G_{k}|-1\choose \ell - 1}^{-1/2} 
I_{\{|G_{0,j}\cap G_{0,k}|\ge \ell -1\}}
\cr &=& {{(|G_{j}\cap G_{k}| -1)_{+}} \choose \ell-1}{|G_{j}| -1 \choose \ell - 1}^{-1/2} 
{|G_{k}| -1\choose \ell - 1}^{-1/2} 
I_{\{ |G_{j}\cap G_{k}|\ge \ell\}}. 
\ees
{For $j=k$, $\Var\big( g^{(\ell-1)}_{0,j}(\bX_j)\big)\le 1$ as $|G_{0,j}|\le |G_j|-1$. 
It follows that, for} $|G_{j}\cap G_{k}|\geq 1,$
\bes
R_{j,k}\E\Big[ g^{(\ell-1)}_{0,j}(\bX_j)g^{(\ell-1)}_{0,k}(\bX_k)\Big] 
{\begin{cases} = R^{(\ell)}_{j,k}, & j\neq k \hbox{ in } J^{(\ell)}, 
\cr \le R^{(\ell)}_{j,k}, & j=k \in J^{(\ell)}. \end{cases}}
\ees
For the case $|G_{j}\cap G_{k}|= 0$, the above 
{relationship} trivially holds with both sides equal to zero. 
{It follows that when $\lam_{\min}(R\circ W)\le 0$, 
\bes
&& \lam_{\min}\big(\big(R^{(\ell)}\circ W\big)_{J^{(\ell)},J^{(\ell)}}\big)
\cr &=& \min_{\|\bu_{J^{(\ell)}}\|_2=1} 
\sum_{j\in J^{(\ell)}}\sum_{k\in J^{(\ell)}}u_ju_k R_{j,k}^{(\ell)} W_{j,k}
\cr &\ge & \min_{\|\bu\|_2=1} 
\sum_{j\in J^{(\ell)}}\sum_{k\in J^{(\ell)}}
u_ju_k R_{j,k}W_{j,k}
\E\Big[ g^{(\ell-1)}_{0,j}(\bX_j)g^{(\ell-1)}_{0,k}(\bX_k)\Big] 
\cr &\ge & \min_{\|\bu\|_2=1} \lam_{\min}(R\circ W) 
\E\bigg[\sum_{j\in J^{(\ell)}}\Big(u_jg^{(\ell-1)}_{0,j}(\bX_j)\Big)^2\bigg]
\cr &\ge & \lam_{\min}(R\circ W),
\ees
where the last inequality holds due to the fact that 
$$ \E\Big[ (g^{(\ell-1)}_{0,j}(\bX_j))^2\Big] =
 {{|G_{0,j}}|\choose \ell-1}{|G_{j}|-1\choose \ell - 1}^{-1} 
I_{\{|G_{0,j}|\ge \ell -1\}}\leq 1.
$$
In the general case, we notice that $\lam_{\min}(R\circ(W - cI_{p\times p}))\le 0$ 
and $W_{j,k}-cI_{\{j=k\}}\ge 0$ for all $1\le j,k\le p$ {and} $c=\min_{1\le j\le p} W_{j,j}$, so that 
\bes
\min_{f_{1:p}\in \calF_{1:p}}\lam_{\min}\big(K_{W,f_{1:p}}\big) - c
&=& \min_{f_{1:p}\in \calF_{1:p}}\lam_{\min}\big(K_{1,f_{1:p}}\circ(W-cI_{p\times p})\big)
\cr &=& \lam_{\min}\big(R\circ(W-cI_{p\times p})\big)
\cr &=& \lam_{\min}\big(R\circ W\big) - c 
\ees 
as the diagonal of $K_{1,f_{1:p}}$ and $R$ are both $I_{p\times p}$.} 
\end{proof}

{
\section*{Acknowledgments}
The authors would like to thank the Associate Editor and two referees whose constructive comments 
led to the examples in Subsection 2.3.}

\section{Appendix} 

We prove Lemmas \ref{lm-1} and \ref{lm-2} in this Appendix. 

\begin{proof}[Proof of Lemma \ref{lm-1}] 
Let $g_t(x) = h(t)f_t(x)\big/\big\{\E\big[f^2_t(X_t)\big]\big\}^{1/2}$ 
{with $h$ satisfying $\|h\|_{L_2(\nu)}^2=1$.} 
As $\int_{\calT}\E[g_t^2(X_t)]\nu(dt) = \|h\|_{L_2(\nu)}^2{=1}$, $f_{\calT}\in\calF_{\calT}$ 
implies $g_{\calT}\in\calF_{\calT}$, so that by \eqref{NL-corr-max-stoch} 
\bes
\rholam^{NL}_{\max} 
&=& \sup_{f_{\calT}\in\calF_{\calT}}\sup_{\|h\|_{L_2(\nu)}=1} 
\int_{s\in \calT}\int_{t\in \calT} \rho\left(f_s(X_s),f_t(X_t)\right){W_{s,t}}h(s)h(t)\nu(ds)\nu(dt)
\cr &\le & \sup_{g_{\calT}\in\calF_{\calT}} 
\frac{\int_{s\in \calT}\int_{t\in \calT} \E\big[g_s(X_s),g_t(X_t)\big]{W_{s,t}} \nu(ds)\nu(dt)}
{\int \E\big[g_t^2(X_t)\big] \nu(dt)}. 
\ees
On the other hand, letting 
$h(t) = \big\{\E\big[f^2_t(X_t)\big]\big/\int_{t\in \calT} \E\big[f_t^2(X_t)\big] \nu(dt)\big\}^{1/2}$, 
we have 
\bes
\rholam^{NL}_{\max} 
&\ge & 
\int_{s\in \calT}\int_{t\in \calT} \rho\left(f_s(X_s),f_t(X_t)\right){W_{s,t}}h(s)h(t)\nu(ds)\nu(dt)
\cr & = &  
\frac{\int_{s\in \calT}\int_{t\in \calT} \E\big[f_s(X_s),f_t(X_t)\big] {W_{s,t}}\nu(ds)\nu(dt)}
{\int_{t\in \calT} \E\big[f_t^2(X_t)\big] \nu(dt)}. 
\ees
for all $f_{\calT}\in \calF_{\calT}$. Thus, \eqref{NL-corr-max-stoch} and \eqref{eq: multi extreme} 
are equivalent. We omit the proof of the equivalence between \eqref{NL-corr-min-stoch}
and \eqref{eq: multi min extreme} as it can be established by the same argument. 
\end{proof}


\begin{proof}[Proof of Lemma \ref{lm-2}] 
Let $h$ be a function on $\calT$ with $\|h\|_{L_2(\nu)}=1$ 
{and $B_n\subset \calT$ be as in Assumption~A. 
Let $\{X_t^{(i)}, t\in\calT\}_{1\leq i\leq m-1}$ be iid copies of $X_{\calT}$. 
Because $\big|\E\big[X_sX_t\big]\big|\le \E\big[|X_s^{(i)}X_t^{(i)}|\big] \le 1$ and $|K_W(s,t)|\le W_{s,t}$, by Cauchy-Schwarz
\bes
&& \E \int_{B_n}\int_{B_n} \bigg(\prod_{i=1}^{m-1} \big|X_s^{(i)}X_t^{(i)}\big|\bigg) \Big|K_W(s,t) h(s)h(t)\Big| \nu(ds)\nu(dt)
\cr &\le& {\bigg(\int_{B_n}\int_{B_n}W_{s,t}^2\nu(ds)\nu(dt)\bigg)^{1/2}}
<\infty. 
\ees
Thus the exchange of expectation and integration is allowed in the following derivation: 
\bes
&& \int_{B_n}\int_{B_n} \big(\E[X_sX_t]\big)^{m-1}K_W(s,t) h(s)h(t)\nu(ds)\nu(dt) 
\cr & = & \E \int_{B_n}\int_{B_n} 
\bigg(\prod_{i=1}^{m-1} \big(X_s^{(i)}X_t^{(i)}\big)\bigg) K_W(s,t) h(s)h(t) \nu(ds)\nu(dt)
\cr &=& \E \int_{B_n}\int_{B_n} K_W(s,t) \bigg\{{h(s)}\prod_{i=1}^{m-1} X_s^{(i)} \bigg\} 
\bigg\{{h(t)}\prod_{i=1}^{m-1} X_t^{(i)} \bigg\} \nu(ds)\nu(dt)
\cr &\le & \rholam^{L}_{\max}\int \E\bigg[\bigg(I\{t\in B_n\}h(t)\prod_{i=1}^{m-1}X_t^{(i)}\bigg)^2\bigg] \nu(dt)
\cr & = & \rholam^{L}_{\max} \int_{B_n} h^2(t)\nu(dt). 
\ees
{Moreover,} as the exchange of expectation and integration is allowed, 
\bes
&& \int_{B_n}\int_{B_n} {\big(\E[X_sX_t]\big)^{m-1}K_W}(s,t) {h}(s){h}(t)\nu(ds)\nu(dt) 
\cr &=& \E \int_{B_n}\int_{B_n} K_W(s,t) \bigg\{{h(s)}\prod_{i=1}^{m-1} X_s^{(i)} \bigg\} 
\bigg\{{h(t)}\prod_{i=1}^{m-1} X_t^{(i)} \bigg\} \nu(ds)\nu(dt)
\cr & \ge & \rholam^{L}_{\min}\int \E\bigg[\bigg(I\{t\in B_n\}h(t)\prod_{i=1}^{m-1}X_t^{(i)}\bigg)^2\bigg] \nu(dt)
\cr &=& \rholam^{L}_{\min} \int_{B_n} h^2(t)\nu(dt). 
\ees
As the operator $K_W$ is bounded by Assumption A, $\rholam^{L}_{\max}$ and $\rholam^{L}_{\min}$ 
are both finite, so that the inequalities still hold as $B_n\to\calT$.} 
\end{proof}

\bibliographystyle{plainnat}
\bibliography{SPA-Nonlinear-Revision-July-2020.bib}
\end{document}